\documentclass[oneside,11pt]{amsart}

\usepackage{float}
\usepackage{amscd,amssymb}
\usepackage[mathscr]{eucal}
\usepackage[all]{xy}
\usepackage{mathrsfs}
\usepackage{amsfonts}
\usepackage{amsmath}
\usepackage{amsthm}
\usepackage{latexsym}
\usepackage[all]{xy}

\title{On the $\sf D$-affinity of flag varieties in positive
 characteristic}
 
\author{Alexander Samokhin}

\address{Institute for Information Transmission Problems, Moscow,
  Russia}

\email{alexander.samokhin@gmail.com}

\openup1\jot \setlength{\topmargin}{0.1\topmargin}
\setlength{\oddsidemargin}{0.1\oddsidemargin}
\setlength{\evensidemargin}{0.1\oddsidemargin}
\setlength{\textheight}{1.1\textheight}
\setlength{\textwidth}{1.3\textwidth}

\newcommand{\Oo}{\mathcal O}
\newcommand{\Uu}{\mathcal U}

\newcommand{\Pp}{\mathbb P}

\newcommand{\Ff}{\mathcal F}
\newcommand{\Ee}{\mathcal E}
\newcommand{\Ll}{\mathcal L}

\newcommand{\D}{\mathcal D}

\newcommand*{\Hom}{\mathop{\mathrm Hom}\nolimits}
\newcommand*{\Dd}{\mathop{\mathrm D\kern0pt}\nolimits}

\newcommand*{\Ext}{\mathop{\mathrm Ext}\nolimits}

\newtheorem{theorem}{Theorem}[section]

\newtheorem{lemma}{Lemma}[section]
\newtheorem{proposition}{Proposition}[section]

\newtheorem{remark}{Remark}[section]


\long\def\comment#1{}

\begin{document}

\maketitle

\begin{abstract}
Let ${\bf G}$ be a simple simply connected algebraic group of type
${\bf B}_2$ over an algebraically closed field $k$ of odd
characteristic. We prove that the flag variety ${\bf G}/{\bf B}$ is
$\sf D$-affine. This extends an earlier result of Andersen and Kaneda
\cite{AK}. 
\end{abstract}

\maketitle

\section{\bf Introduction}
Let $X$ be a smooth algebraic variety over an algebraically closed
field $k$, and let ${\mathcal D}_X$ be the sheaf of differential operators on
$X$. Then $X$ is said to be $\sf D$-affine if the
following two conditions hold: (i) for any ${\mathcal D}_X$-module $\rm M$ that is quasi-coherent
  over $\Oo _X$ the natural morphism 
${\mathcal D}_X\otimes _{{\Gamma ({\mathcal D}_X})}{\Gamma (\rm
  M)}\rightarrow {\rm M}$ is onto, and  (ii) ${\rm H}^i(X,{\mathcal
  D}_X) = 0$ for $i > 0$. 
Let $\bf G$ be a semisimple algebraic group over $k$ and $\bf P$ a
parabolic subgroup of $\bf G$. If $k$ is of characteristic zero
then the well--known Beilinson--Bernstein localization theorem
\cite{BB} states that ${\bf G}/{\bf P}$ is $\sf D$-affine. Much less is known when
$k$ is of characteristic $p>0$. Haastert \cite{Haas} showed that projective spaces and the flag variety of the group ${\bf SL}_3$
are $\sf D$-affine, and Langer \cite{Lan} proved the $\sf D$-affinity
for odd-dimensional quadrics if the characteristic of $k$ is greater than the dimension of
variety (while even-dimensional quadrics turn out to be not $\sf
D$-affine). Even earlier, Kashiwara and Lauritzen \cite{KaLau} produced a
counterexample to the $\sf D$-affinity: their result implies that the flag variety of the
group ${\bf SL}_5$ is not $\sf D$-affine in any characteristic. Nevertheless, the question about which
flag varieties are $\sf D$-affine in positive characteristic remains
open; nothing was known except the above cases.
In the present paper we show that the flag variety of the group ${\bf
  Sp}_4$ is $\sf D$-affine in odd characteristic. By \cite{Haas}, it is sufficient to prove
that ${\rm H}^{i}({\bf Sp}_4/{\bf B},{\mathcal D}_{{{\bf Sp}_4}/{\bf
    B}})=0$ for $i>0$. This is achieved by showing that
all the terms of the $p$-filtration on the sheaf ${\mathcal D}_{{{\bf
    Sp}_4}/{\bf B}}$ have vanishing higher cohomology groups, thus 
extending an earlier result of Andersen and Kaneda \cite{AK}, where they showed the cohomology vanishing of the
first term of the $p$-filtration (in any characteristic). Contrary to their
representation theoretic approach, we use simple geometric arguments
to reduce the problem to computing cohomology groups of line
bundles on the flag variety ${\bf Sp}_4/{\bf B}$. Cohomology of line
bundles on flag varieties in the rank two case are well understood thanks
to Andersen's et al. work (see \cite{An} for a recent survey and \cite{Jan} for a comprehensive treatment); working out the
cohomology groups in question completes the
proof. However, for the sake of consistency with our approach and convenience of the reader,
we explicitly show all the necessary vanishings without the use of
general theory.

\subsection*{Acknowledgements} This work has been done during
the author's stays at the IHES and the Institute of Mathematics of Jussieu in the
spring of 2009. He gratefully acknowledges the hospitality and
support of both institutions. Special thanks are due to Bernhard Keller
for kindly inviting the author to the Institute of Mathematics of Jussieu during last few years.

\section{\bf Preliminaries}

Let $k$ be an algebraically closed field of odd characteristic $p>0$, and
$\sf V$ be a symplectic vector space of dimension 4 over $k$. Let $\bf G$ be the
symplectic group ${\bf Sp}_4$ over $k$; the root system of $\bf G$ is of type ${\bf
  B}_2$. Let $\bf B$ be a Borel subgroup of $\bf G$. Consider the flag variety ${\bf G}/{\bf B}$. The group $\bf G$ has two parabolic subgroups ${\bf P}_{\alpha}$ and ${\bf
  P}_{\beta}$ that correspond to the simple roots $\alpha$ and
$\beta$, the root $\beta$ being the long root. 
The homogeneous spaces ${\bf G}/{{\bf P}_{\alpha}}$ and  ${\bf G}/{{\bf
    P}_{\beta}}$ are isomorphic to the 3-dimensional quadric ${\sf Q}_3$ and $\Pp
^3$, respectively. Denote $q$ and $\pi$ the two projections of ${\bf G}/{\bf B}$ onto ${\sf Q}_3$ and $\Pp
^3$. The line bundles on ${\bf G}/{\bf B}$ that correspond to the fundamental weights $\omega _{\alpha}$ and
$\omega _{\beta}$ are isomorphic to $\pi ^{\ast}\Oo _{\Pp ^3}(1)$ and
$q^{\ast}\Oo _{{\sf Q}_3}(1)$, respectively. The canonical line bundle
$\omega _{{\bf G}/{\bf B}}$ corresponds to the weight $-2\rho =
-2(\omega _{\alpha} + \omega _{\beta})$ and is isomorphic to
$\pi ^{\ast}\Oo _{\Pp ^3}(-2)\otimes q^{\ast}\Oo _{{\sf Q}_3}(-2)$.
The projection $\pi$ is the projective
bundle over $\Pp ^3$ associated to a rank two vector bundle $\sf N$
over $\Pp ^3=\Pp ({\sf V})$, and the projection $q$ is the projective bundle associated to the spinor bundle $\Uu
_2$ on ${\sf Q}_3$. The bundle $\sf N$ is symplectic, that is there is
a non-degenerate skew-symmetric pairing $\wedge ^2{\sf N}\rightarrow
\Oo _{\Pp ^3}$ that is induced by the given symplectic structure on
$\sf V$. There is a short exact sequence on $\Pp ^3$:
\begin{equation}
0\rightarrow \Oo _{\Pp ^3}(-1)\rightarrow \Omega _{\Pp ^3}^1(1)\rightarrow {\sf
  N}\rightarrow 0,
\end{equation}

while the spinor bundle $\Uu _2$, which is also isomorphic to the restriction 
of the rank two universal bundle on ${\rm Gr}_{2,4}={\sf Q}_4$ to
${\sf Q}_3$, fits into a short exact sequence on ${\sf Q}_3$:
\begin{equation}
0\rightarrow \Uu _2\rightarrow {\sf V}\otimes \Oo _{{\sf
    Q}_3}\rightarrow \Uu _2^{\ast}\rightarrow 0.
\end{equation}

Let ${\mathcal D}_{{\bf G}/{\bf B}}$ be the sheaf of differential
operators on ${\bf G}/{\bf B}$. By Theorem 4.4.1 of \cite{Haas} flag
varieties are quasi $\sf D$--affine, that is every $\mathcal
D$--module on a flag variety is $\mathcal D$-generated by its global
sections. This implies that the $\sf D$-affinity
of ${\bf G}/{\bf B}$ will follow if the sheaf ${\mathcal D}_{{\bf
G}/{\bf B}}$ has vanishing higher cohomology groups. 
The main result of the paper is the following theorem:

\begin{theorem}
$${\rm H}^{i}({\bf G}/{\bf B},{\mathcal D}_{{\bf G}/{\bf B}}) = 0$$
\end{theorem}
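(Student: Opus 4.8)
The plan is to exploit that $\mathcal{D}_{{\bf G}/{\bf B}}$ is the increasing union of Berthelot's subsheaves $\mathcal{D}^{(m)}$ of differential operators of level $\le m$: each $\mathcal{D}^{(m)}$ is coherent (indeed locally free) over $\Oo_{{\bf G}/{\bf B}}$, and since ${\bf G}/{\bf B}$ is Noetherian $H^{i}({\bf G}/{\bf B},\mathcal{D}_{{\bf G}/{\bf B}})=\varinjlim_{m}H^{i}({\bf G}/{\bf B},\mathcal{D}^{(m)})$. Thus it is enough to show that every term of this ``$p$-filtration'' has vanishing higher cohomology, $H^{i}({\bf G}/{\bf B},\mathcal{D}^{(m)})=0$ for all $m\ge 0$ and $i>0$; the case $m=0$ is the theorem of Andersen and Kaneda \cite{AK}. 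By Frobenius descent (Berthelot; see also \cite{Haas}), writing $F^{m+1}$ for the $(m{+}1)$-fold relative Frobenius and using that ${\bf G}$ and ${\bf B}$ are defined over $\mathbb{F}_{p}$ so that $({\bf G}/{\bf B})^{(m+1)}\cong{\bf G}/{\bf B}$, one has an isomorphism of sheaves of algebras $F^{m+1}_{*}\mathcal{D}^{(m)}\cong\mathcal{E}nd_{\Oo}\bigl(F^{m+1}_{*}\Oo_{{\bf G}/{\bf B}}\bigr)$; as $F$ is affine, the statement becomes
\[
H^{i}\bigl({\bf G}/{\bf B},\,(F^{m+1}_{*}\Oo_{{\bf G}/{\bf B}})^{\vee}\otimes_{\Oo}F^{m+1}_{*}\Oo_{{\bf G}/{\bf B}}\bigr)=0,\qquad i>0.
\]

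Next I would bring in the two projections. The map $\pi\colon{\bf G}/{\bf B}\to\Pp^{3}$ exhibits ${\bf G}/{\bf B}$ as the $\Pp^{1}$-bundle $\Pp({\sf N})$, and $F^{m+1}_{{\bf G}/{\bf B}}$ factors through the Frobenius of ${\bf G}/{\bf B}$ relative to $\Pp^{3}$; the direct image of $\Oo$ under the latter is governed fibrewise by $F^{m+1}_{\Pp^{1},*}\Oo_{\Pp^{1}}=\Oo_{\Pp^{1}}\oplus\Oo_{\Pp^{1}}(-1)^{\oplus(p^{m+1}-1)}$, and the relation $\det{\sf N}=\Oo_{\Pp^{3}}$ read off from $(1)$ (reflecting that ${\sf N}$ is symplectic) keeps the accompanying twists bounded independently of $m$. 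Composing with $F^{m+1}_{\Pp^{3},*}$, which sends a line bundle on $\Pp^{3}$ to a direct sum of bundles $\Oo_{\Pp^{3}}(b)$ with $b$ in a fixed finite range (already $F_{\Pp^{3},*}$ maps $\{\Oo(-3),\dots,\Oo(0)\}$ to itself), one obtains a filtration of $F^{m+1}_{*}\Oo_{{\bf G}/{\bf B}}$ whose successive quotients are line bundles $\Ll(\lambda)$ with $\langle\lambda,\beta^{\vee}\rangle\in\{0,-1\}$ and $\langle\lambda,\alpha^{\vee}\rangle$ in a fixed finite set. The symmetric analysis along $q\colon{\bf G}/{\bf B}\to{\sf Q}_{3}=\Pp(\Uu_{2})$, using $(2)$, gives the analogue with $\alpha$ and $\beta$ interchanged. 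Tensoring, $(F^{m+1}_{*}\Oo)^{\vee}\otimes F^{m+1}_{*}\Oo$, and therefore $\mathcal{D}^{(m)}$, acquires a filtration by line bundles $\Ll(\mu)$ with $\mu$ ranging over one explicit finite region $\mathcal{R}$ of the weight lattice, the same for every $m$.

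It then remains to compute. For each $\mu\in\mathcal{R}$ one controls $H^{\bullet}({\bf G}/{\bf B},\Ll(\mu))$ using the cohomology of line bundles on ${\bf Sp}_{4}/{\bf B}$, which is completely understood in the rank-two case (\cite{An},\cite{Jan}); for the sake of self-containedness these finitely many values, together with the behaviour of the connecting maps of the filtration, are extracted directly from $\pi$ and $q$, using that $\Pp^{1}$ is $\sf D$-affine relatively to each of them (so $R\pi_{*}$ and $Rq_{*}$ of every subquotient are concentrated in degree $0$, each restriction to a fibre being $\Oo_{\Pp^{1}}(r)$ with $r\ge -1$) and that line-bundle cohomology on $\Pp^{3}$ and ${\sf Q}_{3}$ is elementary. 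Checking this for every $\mu\in\mathcal{R}$ — verifying in particular that wherever a subquotient carries nonzero higher cohomology it is annihilated along the filtration by a neighbouring subquotient — yields $H^{i}({\bf G}/{\bf B},\mathcal{D}^{(m)})=0$ for $i>0$, and passing to the limit over $m$ proves the theorem.

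The main obstacle is precisely this last step in characteristic $p$. Unlike in characteristic zero, a line bundle $\Ll(\mu)$ on ${\bf G}/{\bf B}$ need not have its cohomology concentrated in a single degree — for instance $H^{1}({\bf G}/{\bf B},\Ll(-\alpha))\neq 0$, and $-\alpha$ lies in the region $\mathcal{R}$ above — so the argument closes only because the subquotients carrying higher cohomology sit in the filtration exactly where those classes cancel, and establishing this forces one to pin down not just which $\Ll(\mu)$ occur but in what order. It is here that the hypotheses ``$p$ odd'' and ``type ${\bf B}_{2}$'' enter: for the bad prime $p=2$, or for larger groups — already for ${\bf SL}_{5}$ — the corresponding region contains weights whose higher cohomology cannot be cancelled, consistently with the Kashiwara--Lauritzen counterexample \cite{KaLau}.
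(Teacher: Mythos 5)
Your opening reduction is sound and agrees with the paper's: both pass from $\mathcal D_{{\bf G}/{\bf B}}$ to the terms ${\mathcal End}({\sf F}^n_{\ast}\Oo_{{\bf G}/{\bf B}})$ of the $p$-filtration (the paper via Haastert's Theorem 1.2.4, you via Berthelot's $\mathcal D^{(m)}$ and Frobenius descent), and commuting cohomology with the direct limit is legitimate. After that, however, there are two genuine gaps. The first is the structural claim that ${\sf F}^{m+1}_{\ast}\Oo_{{\bf G}/{\bf B}}$, and hence ${\mathcal End}({\sf F}^{m+1}_{\ast}\Oo_{{\bf G}/{\bf B}})$, is filtered by line bundles $\Ll(\mu)$ with $\mu$ confined to a finite region $\mathcal R$ independent of $m$. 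The fibrewise argument you give does not establish this: factoring ${\sf F}^{m+1}$ through the relative Frobenius of a $\Pp^1$-bundle produces as intermediate object (a twist of) the divided power ${\sf D}^{p^{m+1}-2}$ of the rank-two bundle, whose pullback to the total space is filtered by relative twists $\Oo_{\pi}(j)$ with $j$ running over an interval of length about $p^{m+1}$; the subsequent pushforward along the base Frobenius compresses only the part of a weight pulled back from the base, not the relative part, and the two fibrations do not decouple. So the uniform boundedness of $\mathcal R$ is exactly the nontrivial point, it is nowhere proved, and $\mathcal R$ is never exhibited.

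The second gap is decisive even if the first is repaired, and you flag it yourself as "the main obstacle": since weights such as $-\alpha$, with ${\rm H}^1({\bf G}/{\bf B},\Ll(-\alpha))\neq 0$, necessarily occur in any such region (note that $\Ll(-\alpha)$ restricts to $\Oo_{\Pp^1}(-2)$ on the fibres of ${\bf G}/{\bf B}\rightarrow{\bf G}/{\bf P}_{\alpha}$, contradicting your parenthetical assertion that every subquotient restricts to $\Oo_{\Pp^1}(r)$ with $r\geq -1$), termwise vanishing fails and the proof must show that the connecting maps of the filtration cancel the offending classes. That cancellation, which depends on the precise order of the subquotients, is the entire content of the theorem; the proposal offers no mechanism for it, only the hope that it "closes". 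This is precisely the delicate filtration analysis that Andersen and Kaneda carried out for $n=1$, which you would have to redo uniformly in $n$. The paper deliberately avoids this route: it uses only the projection $q$ and Orlov's projective-bundle decomposition to split ${\sf F}^n_{\ast}\Oo_{{\bf G}/{\bf B}}$ into two pieces, reduces ${\rm H}^{i}({\mathcal End}({\sf F}^n_{\ast}\Oo))$ to $\Ext$-groups on ${\sf Q}_3$, and then kills each term of the resolution of the diagonal on ${\sf Q}_3\times{\sf Q}_3$ by explicit vanishings in which every individual object is shown to have no cohomology in the relevant degrees, so that no cancellation between subquotients is ever invoked. To salvage your approach you would need both to prove the uniform boundedness of $\mathcal R$ and to carry out the order-sensitive cancellation; neither is present.
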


for $i>0$.

\begin{proof} Let ${\sf F}^n:{\bf G}/{\bf B}\rightarrow {\bf G}/{\bf
    B}$ be the $n$-th absolute Frobenius morphism. By Theorem 1.2.4 of 
    \cite{Haas} there is an isomorphism of sheaves ${\mathcal D}_{{\bf G}/{\bf
    B}}=\bigcup _{n\geq 1}{\mathcal End}({\sf F}^n_{\ast}\Oo _{{\bf
    G}/{\bf B}})$. Fix $i\geq 0$. Clearly, ${\rm H}^{i}({\bf
    G}/{\bf B},{\mathcal End}({\sf F}^n_{\ast}\Oo _{{\bf G}/{\bf B}}))
  = 0$ for all $n\geq 1$ implies 
${\rm H}^{i}({\bf G}/{\bf B},{\mathcal D}_{{\bf G}/{\bf B}}) = 0$. The
statement will follow from Theorem \ref{th:B_2} below, whose proof
occupies the next two sections.

\end{proof}

\begin{theorem}\label{th:B_2}
$$
{\rm H}^{i}({\bf G}/{\bf B},{\mathcal End}({\sf F}^n_{\ast}\Oo _{{\bf G}/{\bf B}})) = 0
$$
\end{theorem}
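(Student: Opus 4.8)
The plan is to decompose the sheaf $\mathcal{E}nd(\mathsf{F}^n_*\mathcal{O})$ into pieces whose cohomology can be computed geometrically. Here's my approach:
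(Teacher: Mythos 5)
Your proposal contains only a statement of intent --- ``decompose $\mathcal{E}nd({\sf F}^n_{\ast}\Oo)$ into pieces whose cohomology can be computed geometrically'' --- and then stops. No decomposition is given, no pieces are identified, and no cohomology is computed, so there is nothing here that can be checked against the theorem. This is not a gap in an argument; it is the absence of one.

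To indicate the scale of what is missing: the paper realizes this strategy by first producing a concrete two-step filtration of ${\sf F}^n_{\ast}\Oo_{{\bf G}/{\bf B}}$, namely the short exact sequence
\begin{equation*}
0\rightarrow q^{\ast}{\sf F}^n_{\ast}\Oo _{{\sf Q}_3}\rightarrow {\sf
  F}^n_{\ast}\Oo _{{\bf G}/{\bf B}}\rightarrow q^{\ast}({\sf F}^n_{\ast}({\sf D}^{p^n-2}\Uu _2(-1))\otimes \Oo _{{\sf
  Q}_3}(1))\otimes \Oo _{q}(-1)\rightarrow 0
\end{equation*}
coming from the $\Pp^1$-bundle structure $q:{\bf G}/{\bf B}\rightarrow{\sf Q}_3$ and Orlov's semiorthogonal decomposition for projective bundles. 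Applying $\Hom(-,{\sf F}^n_{\ast}\Oo_{{\bf G}/{\bf B}})$ then reduces the theorem to two separate $\Ext$-vanishing statements: the $\sf D$-affinity-type vanishing $\Ext^{i}({\sf F}^n_{\ast}\Oo_{{\sf Q}_3},{\sf F}^n_{\ast}\Oo_{{\sf Q}_3})=0$ on the quadric, and a second vanishing for the other graded piece. Each of these requires further machinery --- the adjoint ${{\sf F}^n}^{!}(?)={{\sf F}^n}^{\ast}(?)\otimes\omega^{1-p^n}$, a resolution of the diagonal on ${\sf Q}_3\times{\sf Q}_3$ whose validity in odd characteristic must itself be verified, and then roughly half a dozen explicit cohomology computations for bundles built from $\Uu_2$, its symmetric and divided powers, and their Frobenius pullbacks. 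If you intend to pursue a decomposition strategy, you must at minimum exhibit the decomposition and reduce the theorem to named, provable vanishing statements; as written, the proposal proves nothing.
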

for $i>0$ and $n\geq 1$.

\vspace*{0.2cm}

The method used in \cite{Haas} and \cite{AK} was to identify the sheaf  
${\mathcal End}({\sf F}^n_{\ast}\Oo _{{\bf G}/{\bf B}})$ with an
equivariant vector bundle on ${\bf G}/{\bf B}$ associated to the
induced module ${\rm Ind}_{\bf B}^{{\bf G}_n{\bf B}}(2(p^n-1)\rho)$,
where ${\bf G}_n$ is the $n$-th Frobenius kernel, and to study an
appropriate filtration on such a module. Our main tool is a short exact
sequence from \cite{Sam} that relates the Frobenius pushforwards of the
structure sheaves on the total space of a $\Pp ^1$-bundle and on the
base variety.

\section{\bf Proof of Theorem \ref{th:B_2}}

Recall the short exact sequence from \cite{Sam} mentioned above. Assume given a smooth variety $S$ and a locally free sheaf $\Ee$ of
rank 2 on $S$. Let $X = \Pp _S(\Ee)$ be the projective bundle over $S$ 
and $\pi : X\rightarrow S$ the projection. Denote $\Oo _{\pi}(-1)$
the relative invertible sheaf. One has $\pi _{\ast}\Oo _{\pi}(1)=\Ee ^{\ast}$.
\begin{lemma}\label{lem:P^1Lemma}
For any $n\geq 1$ there is a short exact sequence of vector bundles on $X$:
\begin{equation}
0\rightarrow \pi ^{\ast}{\sf F}^n_{\ast}\Oo _{S} \rightarrow
{\sf F}^n_{\ast}\Oo _{X} \rightarrow \pi ^{\ast}({\sf F}^n_{\ast}({\sf
  D}^{p^n-2}\Ee \otimes \mbox{\rm det}\ \Ee)\otimes
\mbox{\rm det} \ \Ee ^{\ast})\otimes \Oo _{\pi}(-1)\rightarrow 0 .
\end{equation}

Here ${\sf D}^{k}\Ee = ({\sf S}^{k}\Ee ^{\ast})^{\ast}$ is the $k$-th
divided power of $\Ee$.
\end{lemma}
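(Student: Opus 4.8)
The plan is to construct the short exact sequence locally over $S$ and then patch, using the projection formula to handle the twist by $\Oo_{\pi}(-1)$. First I would work Zariski-locally on $S$ where $\Ee$ trivializes, so that $X = \Pp^1_S = S \times \Pp^1$ and $\Oo_{\pi}(1)$ is the pullback of $\Oo_{\Pp^1}(1)$. In that situation the key computation is purely the $\Pp^1$ case: one must understand ${\sf F}^n_{\ast}\Oo_{\Pp^1}$ as an $\Oo_{\Pp^1}$-module together with its push-forward to the base. The relevant fact is that $\pi_{\ast}({\sf F}^n_{\ast}\Oo_{X} \otimes \Oo_{\pi}(-j))$ fits into exact sequences for small $j$, coming from the standard resolution of $\Oo_{\Pp^1}$-modules; in particular pushing forward along $\pi$ (an affine-over-$S$ map after removing a section, or via the Leray / base-change description of $\pi_{\ast}$) produces the term ${\sf F}^n_{\ast}({\sf D}^{p^n-2}\Ee \otimes \det\Ee)\otimes\det\Ee^{\ast}$ as the "fibrewise degree $p^n-2$" part.

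The core of the argument is the short exact sequence of $\Oo_X$-modules
\begin{equation}
0 \rightarrow \pi^{\ast}\pi_{\ast}({\sf F}^n_{\ast}\Oo_X) \rightarrow {\sf F}^n_{\ast}\Oo_X \rightarrow \pi^{\ast}\pi_{\ast}\big({\sf F}^n_{\ast}\Oo_X \otimes \Oo_{\pi}(1)\big)\otimes \Oo_{\pi}(-1) \rightarrow 0,
\end{equation}
which I would establish by checking that, fibre by fibre over $S$, the bundle ${\sf F}^n_{\ast}\Oo_{\Pp^1}$ decomposes so that its global sections and its degree-$(-1)$ twist capture the whole of it; concretely ${\sf F}^n_{\ast}\Oo_{\Pp^1}$ has a two-step filtration with subsheaf $\Oo_{\Pp^1}$ and quotient a sum of copies of $\Oo_{\Pp^1}(-1)$. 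The next step is to identify the three terms. We have $\pi_{\ast}({\sf F}^n_{\ast}\Oo_X) = {\sf F}^n_{\ast}\pi_{\ast}\Oo_X = {\sf F}^n_{\ast}\Oo_S$ by flat base change for the Cartesian square relating $\pi$ and the Frobenius on $S$ and $X$ (here one uses that Frobenius commutes with $\pi$ up to the relative Frobenius, and that $\pi$ is flat). For the quotient term one computes $\pi_{\ast}({\sf F}^n_{\ast}\Oo_X \otimes \Oo_{\pi}(1))$: by the projection formula this equals ${\sf F}^n_{\ast}\big(\Oo_X \otimes {\sf F}^{n\ast}\Oo_{\pi}(1)\big)$ pushed down, and since ${\sf F}^{n\ast}\Oo_{\pi}(1) = \Oo_{\pi}(p^n)$, it becomes ${\sf F}^n_{\ast}\big(\pi_{\ast}\Oo_{\pi}(p^n)\big) = {\sf F}^n_{\ast}({\sf S}^{p^n}\Ee^{\ast})^{\ast}$-type expression; unwinding the duality $\Oo_{\pi}(1) = \Oo_{\pi}(-1)^{\ast}$ and the relation $\pi_{\ast}\Oo_{\pi}(m) = {\sf S}^m\Ee^{\ast}$ together with a twist by $\det\Ee$ bookkeeping (Serre duality along the fibres introduces $\det\Ee$ and a shift by $2$, turning $p^n$ into $p^n-2$) produces exactly ${\sf F}^n_{\ast}({\sf D}^{p^n-2}\Ee\otimes\det\Ee)\otimes\det\Ee^{\ast}$.

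The main obstacle I anticipate is the careful bookkeeping of the twists by $\det\Ee$ and the degree shift from $p^n$ to $p^n-2$: one must correctly track how the relative Frobenius, the projection formula, and relative Serre duality on the $\Pp^1$-bundle interact, since a naive count gives degree $p^n$ rather than $p^n-2$. The resolution is that the relevant quotient is not $\pi_{\ast}({\sf F}^n_{\ast}\Oo_X\otimes\Oo_{\pi}(1))$ directly but rather involves $\omega_{\pi} = \Oo_{\pi}(-2)\otimes\pi^{\ast}\det\Ee$, so the degree drops by $2$ and one picks up $\det\Ee$; dualizing ${\sf S}^{p^n-2}\Ee^{\ast}$ then yields the divided power ${\sf D}^{p^n-2}\Ee$. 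Once the three terms are correctly identified over a trivializing cover, the sequence is canonical (it is the relative analogue of the $\Pp^1$ statement, all maps being $\Oo_X$-linear and compatible with transition functions), so it glues to the asserted global short exact sequence on $X$. I would also remark that exactness on the left and the local-freeness of all three terms can be checked fibrewise over $S$, reducing everything to the single computation on $\Pp^1$ with its iterated Frobenius.
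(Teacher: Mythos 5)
Your overall strategy --- split ${\sf F}^n_{\ast}\Oo _X$ into a piece pulled back from $S$ and a piece of the form $\pi ^{\ast}(\cdot)\otimes \Oo _{\pi}(-1)$, identify the subobject as $\pi ^{\ast}{\sf F}^n_{\ast}\Oo _S$ using $\pi \circ {\sf F}^n={\sf F}^n\circ \pi$, and identify the quotient via relative Serre duality along $\pi$ --- is the right one, and it genuinely differs in execution from the paper, which gets the sequence in one stroke from Orlov's semiorthogonal decomposition for a $\Pp ^1$-bundle (the canonical triangle $\pi ^{\ast}{\rm R}^{\bullet}\pi _{\ast}A\rightarrow A\rightarrow \pi ^{\ast}(\tilde A)\otimes \Oo _{\pi}(-1)\rightarrow \cdots$ for arbitrary $A$). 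However, your central displayed sequence is wrong as written: by the projection formula its third term is $\pi ^{\ast}{\sf F}^n_{\ast}({\sf S}^{p^n}\Ee ^{\ast})\otimes \Oo _{\pi}(-1)$, of rank $p^{n\dim S}(p^n+1)$, whereas the cokernel of $\pi ^{\ast}{\sf F}^n_{\ast}\Oo _S\rightarrow {\sf F}^n_{\ast}\Oo _X$ has rank $p^{n\dim S}(p^n-1)$. Fibrewise the point is that for $V=\Oo ^a\oplus \Oo (-1)^b$ on $\Pp ^1$ the multiplicity $b$ of the $\Oo (-1)$-part is $h^1(V(-1))$, not $h^0(V(1))$; so the correct coefficient is $\tilde A={\rm R}^1\pi _{\ast}({\sf F}^n_{\ast}\Oo _X\otimes \Oo _{\pi}(-1))\otimes \det \Ee ^{\ast}={\sf F}^n_{\ast}({\rm R}^1\pi _{\ast}\Oo _{\pi}(-p^n))\otimes \det \Ee ^{\ast}$, and relative Serre duality must be applied to ${\rm R}^1\pi _{\ast}\Oo _{\pi}(-p^n)$ (not to $\pi _{\ast}\Oo _{\pi}(p^n)$) to produce ${\sf D}^{p^n-2}\Ee \otimes \det \Ee$. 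You do name the right mechanism ($\omega _{\pi}=\Oo _{\pi}(-2)\otimes \pi ^{\ast}\det \Ee$, the degree drop by $2$, the extra $\det \Ee ^{\ast}$), and you land on the correct final answer, but you never replace the incorrect display by the corrected statement, so the key identification is only gestured at rather than proved.

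Two smaller points. The identity $\pi _{\ast}{\sf F}^n_{\ast}={\sf F}^n_{\ast}\pi _{\ast}$ is not a flat base change for a Cartesian square (the Frobenius square is not Cartesian); it is simply functoriality of pushforward for the commuting square $\pi \circ {\sf F}^n={\sf F}^n\circ \pi$ together with the finiteness of ${\sf F}^n$, which kills the higher direct images. And your glueing step is asserted rather than established: after trivializing $\Ee$ you obtain the sequence on each $U\times \Pp ^1$, but you still need a canonical global surjection ${\sf F}^n_{\ast}\Oo _X\rightarrow \pi ^{\ast}(\tilde A)\otimes \Oo _{\pi}(-1)$ compatible with transition functions. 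This globally defined map is exactly what the unit and counit maps of Orlov's decomposition provide for free, and it is the main thing your local approach would have to reconstruct by hand.
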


For convenience of the reader we recall the proof.

\begin{proof}
Let $\Dd ^b(X)$ be the bounded derived category of coherent sheaves on
  $X$, and denote $[1]$ the shift functor. By \cite{Orlov}, for any object $A\in \Dd ^{b}(X)$ there is a distinguished triangle:
\begin{equation}\label{eq:exacttrianP1bun}
\dots \rightarrow \pi ^{\ast}{\rm R}^{\bullet}\pi _{\ast}A \rightarrow A \rightarrow
\pi ^{\ast}({\tilde A})\otimes \Oo _{\pi}(-1)\rightarrow \pi
^{\ast}{\rm R}^{\bullet}\pi _{\ast}A [1] \rightarrow \dots  .
\end{equation}

\smallskip

The object $\tilde A$ can be found by tensoring the triangle
(\ref{eq:exacttrianP1bun}) with $\Oo _{\pi}(-1)$ and applying the
functor ${\rm R}^{\bullet}\pi _{\ast}$ to the obtained triangle. Given that
${\rm R}^{\bullet}\pi _{\ast}\Oo _{\pi}(-1) = 0$, we get an isomorphism:
\begin{equation}\label{eq:isomforP1bun}
{\rm R}^{\bullet}\pi _{\ast}(A\otimes \Oo _{\pi}(-1)) \simeq
{\tilde A}\otimes {\rm R}^{\bullet}\pi _{\ast}\Oo _{\pi}(-2).
\end{equation}

One has ${\rm R}^{\bullet}\pi _{\ast}\Oo _{\pi}(-2) =
\mbox{det} \ \Ee[-1]$. Tensoring both sides of the isomorphism (\ref{eq:isomforP1bun}) with
$\mbox{det} \ \Ee ^{\ast}$, we get:
\begin{equation}\label{eq:tildeA}
{\tilde A} = {\rm R}^{\bullet}\pi _{\ast}(A\otimes \Oo _{\pi}(-1))\otimes \mbox{det} \
{\mathcal E}^{\ast}[1].
\end{equation}

Let now $A$ be the vector bundle ${\sf F}^n_{\ast}\Oo _X$. The triangle
(\ref{eq:exacttrianP1bun}) becomes in this case:
\begin{equation}\label{eq:trianforFrobdirimage}
\dots \rightarrow \pi ^{\ast}{\rm R}^{\bullet}\pi _{\ast}{\sf F}^n_{\ast}\Oo _{X} \rightarrow
{\sf F}^n_{\ast}\Oo _{X} \rightarrow
\pi ^{\ast}({\tilde A})\otimes \Oo _{\pi}(-1)\rightarrow \pi
^{\ast}{\rm R}^{\bullet}\pi _{\ast}{\sf F}^n_{\ast}\Oo _X [1] \rightarrow
\dots \quad .
\end{equation}

where ${\tilde A} = {\rm R}^{\bullet}\pi _{\ast}({\sf F}_{\ast}\Oo _{X}\otimes \Oo _{\pi}(-1))\otimes \mbox{det} \
{\mathcal E}^{\ast}[1]$. Recall that for a coherent sheaf $\Ff$ on
$X$ one has an isomorphism ${\rm R}^{i}\pi _{\ast}{\sf F}^n_{\ast}\Ff = {\sf F}^n_{\ast}{\rm
  R}^{i}\pi _{\ast}\Ff$, the Frobenius morphism being finite and
commuting with arbitrary morphisms. Therefore,
\begin{equation}
{\rm R}^{\bullet}\pi _{\ast}{\sf F}^n_{\ast}\Oo _{X} =
{\sf F}^n_{\ast}{\rm R}^{\bullet}\pi _{\ast}\Oo _{X} = {\sf F}^n_{\ast}\Oo
_{S}.
\end{equation}

On the other hand, by the projection formula one has ${\rm R}^{\bullet}\pi _{\ast}({\sf
  F}^n_{\ast}\Oo _{X}\otimes \Oo _{\pi}(-1))$ = ${\rm R}^{\bullet}\pi
_{\ast}({\sf F}^n_{\ast}\Oo _{\pi}(-p^n))$ = ${\sf F}^n_{\ast}{\rm R}^{\bullet}\pi _{\ast}\Oo
_{\pi}(-p^n)$. The relative Serre duality for $\pi$ gives:
\begin{equation}
{\rm R}^{\bullet}\pi _{\ast}\Oo _{\pi}(-p^n) = {\sf D}^{p^n-2}\Ee\otimes \mbox{det} \ \Ee[-1].
\end{equation}

Let ${\tilde \Ee}$ be the vector bundle ${\sf D}^{p^n-2}\Ee\otimes \mbox{det} \ \Ee$.
Putting these isomorphisms together we see that the triangle
(\ref{eq:trianforFrobdirimage}) can be rewritten as follows:
\begin{equation}
\dots \rightarrow \pi ^{\ast}{\sf F}^n_{\ast}\Oo _{S} \rightarrow
{\sf F}^n_{\ast}\Oo _{X} \rightarrow \pi ^{\ast}({\sf F}^n_{\ast}{\tilde \Ee}\otimes
\mbox{det} \ \Ee ^{\ast})\otimes \Oo
_{\pi}(-1)\stackrel{[1]}\rightarrow \dots  .
\end{equation}

Therefore, the above distinguished triangle is in fact a
short exact sequence of vector bundles on $X$:
\begin{equation}\label{eq:frobshorexseqP1bun}
0\rightarrow \pi ^{\ast}{\sf F}^n_{\ast}\Oo _{S} \rightarrow
{\sf F}^n_{\ast}\Oo _{X} \rightarrow \pi ^{\ast}({\sf F}^n_{\ast}{\tilde \Ee}\otimes
\mbox{det} \ \Ee ^{\ast})\otimes \Oo _{\pi}(-1)\rightarrow 0.
\end{equation}

\end{proof}

We will use the projection $q:{\bf G}/{\bf B}\rightarrow {\sf Q}_3$ to compute the bundle ${\sf
  F}^n_{\ast}\Oo _{{\bf G}/{\bf B}}$. Applying Lemma
  \ref{lem:P^1Lemma} in this case, we get a short exact sequence:
\begin{equation}
0\rightarrow q^{\ast}{\sf F}^n_{\ast}\Oo _{{\sf Q}_3}\rightarrow {\sf
  F}^n_{\ast}\Oo _{{\bf G}/{\bf B}}\rightarrow q^{\ast}({\sf F}^n_{\ast}({\sf D}^{p^n-2}\Uu _2(-1))\otimes \Oo _{{\sf
  Q}_3}(1))\otimes \Oo _{q}(-1)\rightarrow 0.
\end{equation}

Here $\Oo _q(-1)=\pi ^{\ast}\Oo _{\Pp ^3}(-1)$ is the relative line bundle with respect
  to the projection $q$. Apply the functor $\Hom (-,{{\sf
  F}^n_{\ast}}\Oo _{{\bf G}/{\bf B}})$ to this sequence. Consider
  first the groups $\Ext ^{i}(q^{\ast}{\sf F}^n_{\ast}\Oo _{{\sf Q}_3},{{\sf
  F}^n_{\ast}}\Oo _{{\bf G}/{\bf B}})$. By adjunction we get an
  isomorphism:
\begin{equation}
\Ext ^{i}(q^{\ast}{\sf F}^n_{\ast}\Oo _{{\sf Q}_3},{{\sf
  F}^n_{\ast}}\Oo _{{\bf G}/{\bf B}}) = \Ext ^{i}({\sf F}^n_{\ast}\Oo _{{\sf Q}_3},{\sf F}^n_{\ast}\Oo _{{\sf
    Q}_3}).
\end{equation}

Indeed, ${\rm R}^{\bullet}q _{\ast}{\sf F}^n_{\ast}\Oo _{{\bf G}/{\bf B}} = {\sf F}^n_{\ast}{\rm
  R}^{\bullet}q_{\ast}\Oo _{{\bf G}/{\bf B}} = {\sf F}^n_{\ast}\Oo _{{\sf Q}_3}$.

\begin{lemma}\label{lem:D-affQ_3}
$\Ext ^{i}({\sf F}^n_{\ast}\Oo _{{\sf Q}_3},{\sf F}^n_{\ast}\Oo _{{\sf
    Q}_3})=0$ for $i>0$ and $n\geq 1$.
\end{lemma}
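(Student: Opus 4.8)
The statement is precisely the assertion that the $3$-dimensional quadric $\sf Q_3$ is $\sf D$-affine (in the ${\mathcal End}$-formulation), so the plan is to reduce it, exactly as in the $\Pp^1$-bundle situation above, to a cohomology computation for line bundles on $\sf Q_3$ and then on $\Pp^3$. The key observation is that $\sf Q_3$ is itself the base of two useful fibrations, but the cleanest route is to use the projection $p\colon \sf Q_3 \to \Pp^3$ realizing $\sf Q_3$ as a $\Pp^1$-bundle (this is the standard $\mathrm{Sp}_4$-picture: $\sf Q_3 = \mathbb{P}_{\Pp^3}(\sf N)$ with $\sf N$ the rank-two symplectic bundle of $(2.1)$, and $\Oo_p(-1) \simeq q^{\ast}\Oo_{\sf Q_3}(-1)$ pulled appropriately, matching the identification $\Oo_q(-1) = \pi^{\ast}\Oo_{\Pp^3}(-1)$ used just above). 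Applying Lemma \ref{lem:P^1Lemma} to this $\Pp^1$-bundle gives a short exact sequence
\begin{equation}
0\rightarrow p^{\ast}{\sf F}^n_{\ast}\Oo _{\Pp ^3}\rightarrow {\sf F}^n_{\ast}\Oo _{{\sf Q}_3}\rightarrow p^{\ast}\bigl({\sf F}^n_{\ast}({\sf D}^{p^n-2}{\sf N}\otimes \mbox{det}\,{\sf N})\otimes \mbox{det}\,{\sf N}^{\ast}\bigr)\otimes \Oo _{p}(-1)\rightarrow 0,
\end{equation}
and since $\sf N$ is symplectic, $\mbox{det}\,{\sf N}\simeq \Oo_{\Pp^3}$, which simplifies the right-hand term to $p^{\ast}{\sf F}^n_{\ast}({\sf D}^{p^n-2}{\sf N})\otimes \Oo_p(-1)$.

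Next I would apply $\Hom(-,{\sf F}^n_{\ast}\Oo_{\sf Q_3})$ to this sequence and analyze the two resulting $\Ext$ groups. For the sub-bundle term, adjunction and ${\rm R}^{\bullet}p_{\ast}{\sf F}^n_{\ast}\Oo_{\sf Q_3} = {\sf F}^n_{\ast}\Oo_{\Pp^3}$ give $\Ext^i(p^{\ast}{\sf F}^n_{\ast}\Oo_{\Pp^3},{\sf F}^n_{\ast}\Oo_{\sf Q_3}) = \Ext^i({\sf F}^n_{\ast}\Oo_{\Pp^3},{\sf F}^n_{\ast}\Oo_{\Pp^3})$, which vanishes for $i>0$ by the known $\sf D$-affinity of projective space (Haastert \cite{Haas}; concretely ${\sf F}^n_{\ast}\Oo_{\Pp^3}$ is a direct sum of line bundles $\Oo_{\Pp^3}(a)$ with $-p^n < a \le 0$, and all mutual $\Ext$'s between such bundles vanish in positive degree). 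For the quotient term I would push forward along $p$: using the projection formula and ${\rm R}^{\bullet}p_{\ast}\Oo_p(-1)=0$, the computation reduces to $\Ext$ groups on $\Pp^3$ between ${\sf F}^n_{\ast}$ of explicit bundles built from ${\sf D}^{p^n-2}{\sf N}$ and the terms ${\sf F}^n_{\ast}(\text{divided powers of }{\sf N})$ coming from ${\sf F}^n_{\ast}\Oo_{\sf Q_3}$ itself; equivalently, twisting by $\Oo_p(-1)$ in the middle and using Serre duality along $p$, one lands on cohomology of $\Oo_p(-2)$-twisted bundles, i.e. $H^{\bullet}(\Pp^3, {\sf F}^n_{\ast}(\cdots)\otimes {\sf F}^n_{\ast}(\cdots))$. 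Resolving $\sf N$ via the sequence $(2.1)$, every such bundle admits a filtration whose graded pieces are twists of $\Omega^{\bullet}_{\Pp^3}$, so the problem ultimately becomes the vanishing of certain $\Ext^i(\Omega^a_{\Pp^3}(s),\Omega^b_{\Pp^3}(t))$ in a controlled range of twists, together with vanishing of $H^i(\Pp^3, \Omega^a_{\Pp^3}(s)\otimes\Omega^b_{\Pp^3}(t))$ — all of which is classical Bott-type vanishing on $\Pp^3$, valid in any characteristic because the relevant modules are the ``small'' divided/symmetric powers with weights bounded by $p^n$.

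The main obstacle is bookkeeping: after applying ${\sf F}^n_{\ast}$, the bundle ${\sf D}^{p^n-2}{\sf N}$ is large (rank growing with $p^n$), and while ${\sf F}^n_{\ast}$ of it still decomposes into manageable pieces after the $\Oo_p(\pm1)$-twists collapse the relative direction, one must track carefully that all the line-bundle twists that occur on $\Pp^3$ stay within the range where Bott vanishing applies — in particular that no ``forbidden'' twist $\Oo_{\Pp^3}(-1),\dots,\Oo_{\Pp^3}(-3)$ appears in a degree where it would contribute cohomology against the dual pieces. I expect this to be the same combinatorial core that reappears (for $q$ instead of $p$) in Lemma \ref{lem:D-affQ_3}'s role inside the larger argument, so the estimates should be uniform in $n$; once the range check is done, each individual vanishing is a one-line Bott computation. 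An alternative, purely representation-theoretic route is to identify ${\mathcal End}({\sf F}^n_{\ast}\Oo_{\sf Q_3})$ with the bundle associated to $\mathrm{Ind}_{{\bf P}_{\alpha}}^{{\bf G}_n{\bf P}_{\alpha}}(\text{top weight})$ and invoke known vanishing for such induced modules over the quadric, but the geometric reduction above is more in the spirit of this paper and avoids importing the full machinery.
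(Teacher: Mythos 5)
There is a fatal error at the very first step: you assert that ${\sf Q}_3$ is itself a $\Pp^1$-bundle over $\Pp^3$, namely ${\sf Q}_3=\Pp_{\Pp^3}({\sf N})$, and the entire argument rests on applying Lemma \ref{lem:P^1Lemma} to this fibration. No such fibration exists. In the ${\bf Sp}_4$-picture it is the four-dimensional flag variety ${\bf G}/{\bf B}$ that equals $\Pp_{\Pp^3}({\sf N})$ (via $\pi$) and also equals $\Pp_{{\sf Q}_3}(\Uu_2)$ (via $q$); the quadric ${\sf Q}_3$ and $\Pp^3$ are the two three-dimensional bases, and there is no morphism between them, let alone a $\Pp^1$-bundle (a $\Pp^1$-bundle over $\Pp^3$ has dimension $4$, while $\dim{\sf Q}_3=3$; moreover ${\sf Q}_3$ has Picard rank one and so admits no nontrivial fibration at all). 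Everything after the displayed short exact sequence — the adjunction reducing to $\Ext^{\bullet}({\sf F}^n_{\ast}\Oo_{\Pp^3},{\sf F}^n_{\ast}\Oo_{\Pp^3})$, the pushforward of the quotient term along $p$, the Bott-type bookkeeping — therefore has no object to apply to, and the approach cannot be repaired by renaming the map: the only way to bring Lemma \ref{lem:P^1Lemma} into play here is through ${\bf G}/{\bf B}$, which is what the paper does for Theorem \ref{th:B_2} itself, not for this lemma.

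For comparison, the paper's proof of Lemma \ref{lem:D-affQ_3} works entirely on ${\sf Q}_3$ (and ${\sf Q}_3\times{\sf Q}_3$): it rewrites $\Ext^i({\sf F}^n_{\ast}\Oo_{{\sf Q}_3},{\sf F}^n_{\ast}\Oo_{{\sf Q}_3})$ as the cohomology of $({\sf F}^n\times{\sf F}^n)^{\ast}(i_{\ast}\Oo_{\Delta})$ twisted by $\Oo\boxtimes\omega^{1-p^n}$, resolves the diagonal by the Kapranov-type complex (\ref{eq:resofdiag3dimquad}) built from the spinor bundle $\Uu_2$ and the bundles $\Psi_1,\Psi_2$ (checking these descriptions survive in odd characteristic), and reduces everything to Proposition \ref{prop:U_2prop}, i.e.\ ${\rm H}^i({\sf Q}_3,{{\sf F}^n}^{\ast}\Uu_2)=0$ for $i\neq 2$ — and only that last computation uses the two $\Pp^1$-bundle projections of ${\bf G}/{\bf B}$. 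Your instinct to reduce to line-bundle cohomology is in the right spirit, but the reduction has to go through the diagonal on ${\sf Q}_3\times{\sf Q}_3$ (or through Langer's explicit decomposition of ${\sf F}^n_{\ast}\Oo_{{\sf Q}_3}$, which the paper also cites), not through a nonexistent fibration of ${\sf Q}_3$ over $\Pp^3$.
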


\begin{proof}
For $n=1$ this follows from \cite{SamCRAS}. For quadrics of arbitrary
dimension an explicit decomposition of the Frobenius pushforward of a
line bundle was found in \cite{Lan}; in particular, this implies Lemma \ref{lem:D-affQ_3}.
However, it is worth giving an independent proof that is based on the
argument from \cite{SamCRAS}; the proof of Theorem \ref{th:B_2} is just an extension of it. Recall (Lemma 2.3, {\it
  loc.cit.}) that there is an isomorphism of cohomology groups:
\begin{equation}
\Ext ^{i}({\sf F}^n_{\ast}\Oo _{{\sf Q}_3},{\sf F}^n_{\ast}\Oo _{{\sf
    Q}_3}) = {\rm H}^{i}({\sf Q}_3\times {\sf Q}_3,({{\sf F}^n\times \sf F}^n)^{\ast}(i_{\ast}\Oo
_{\Delta})\otimes (\Oo  _{{\sf Q}_3}\boxtimes \omega _{{\sf Q}_3}^{1-p^n}))
\end{equation}

There is a resolution of the sheaf $i_{\ast}\Oo _{\Delta}$ (Lemma 3.1, \cite{SamCRAS}):
\begin{equation}\label{eq:resofdiag3dimquad}
0\rightarrow \Uu _2\boxtimes \Uu _2(-2)\rightarrow \Psi _2\boxtimes
\Oo _{{\sf Q}_3}(-2)\rightarrow \Psi
_1\boxtimes \Oo _{{\sf Q}_3}(-1)\rightarrow \Oo _{{\sf Q}_3}\boxtimes \Oo
_{{\sf Q}_3}\rightarrow i_{\ast}\Oo _{\Delta}\rightarrow 0,
\end{equation}

This is a particular case of Kapranov's resolution of the diagonal for
quadrics. Put $\Psi _0 = \Oo _{{\sf Q}_3}$ and $\Psi _3 = \Uu
_2$. When $k$ is of characteristic zero, the bundles $\Psi _i$ for $i=1,2$ can explicitly be described as follows: the bundle $\Psi _1$ is
  isomorphic to the restriction of $\Omega ^{1}(1)$ on $\Pp ^4$ to
  ${\sf Q}_3$, the quadric ${\sf Q}_3$ being naturally embedded into
  $\Pp ^4=\Pp ({\sf W})$, and the bundle $\Psi _2$ fits into the short exact sequence
\begin{equation}\label{eq:Psi_2}
0\rightarrow \Omega ^2_{\Pp ^4}(2)\otimes \Oo _{{\sf Q}_3}\rightarrow
\Psi _2\rightarrow \Oo _{{\sf Q}_3}\rightarrow 0.
\end{equation}

Let us check that the same descriptions of $\Psi _1$ and $\Psi _2$ are
valid when the characteristic of $k$ is an odd prime. This amounts to
computation of cohomology groups. Indeed, for any coherent sheaf $\Ee$ on ${\sf Q}_3$ there is a standard
spectral sequence converging to $\Ee$, and whose ${\rm
  E}_1$-term is equal to ${\rm H}^{i}({\sf Q}_3,\Ee \otimes \Oo _{{\sf
    Q}_3}(j))\otimes \Psi _{-j}$ for $j=-2,\dots
0$, and ${\rm H}^{i}({\sf Q}_3,\Ee \otimes \Uu _2(-2))\otimes \Psi
_{-3}$ for $j=-3$. Taking $\Ee$ to be $\Omega _{\Pp ^4}^1(1)\otimes \Oo _{{\sf Q}_3}$
or $\Omega _{\Pp ^4}^2(2)\otimes \Oo _{{\sf Q}_3}$ and computing the
terms of spectral sequence we see that the cohomology groups in
question are the same as in characteristic zero, thus arriving at the above resolutions for
$\Psi _1$ and $\Psi _2$.

Arguing as in the proof of Theorem 3.2 of \cite{SamCRAS}, we conclude
that Lemma \ref{lem:D-affQ_3} follows from the following statement:
\begin{proposition}\label{prop:U_2prop}
 ${\rm H}^{i}({\sf
    Q}_3,{{\sf F}^n}^{\ast}\Uu _2) = 0$ for $i\neq 2$ and $n\geq 1$.
\end{proposition}

\end{proof}

\begin{proof}
Denote $\Oo _{\pi}(-1)$ the relative line bundle with respect to the
projection  $\pi: {\bf G}/{\bf B}={\Pp}({\sf N})\rightarrow {\bf
  G}/{{\bf P}_{\alpha}}=\Pp ^3$. Consider the short exact sequence
\begin{equation}\label{eq:fundseqonX}
0\rightarrow \pi ^{\ast}\Oo _{\Pp ^3}(-1)\rightarrow q^{\ast}\Uu _2\rightarrow \Oo
_{\pi}(-1)\rightarrow 0.
\end{equation}

Applying the functor ${{\sf F}^n}^{\ast}$ to it we get:

\begin{equation}\label{eq:1stbasicseq}
0\rightarrow \pi ^{\ast}\Oo _{\Pp ^3}(-p^n)\rightarrow q^{\ast}{{\sf
    F}^n}^{\ast}\Uu _2\rightarrow \Oo _{\pi}(-p^n)\rightarrow 0.
\end{equation}

First, one has ${\rm H}^{i}({\bf G}/{\bf B},\pi ^{\ast}\Oo _{\Pp
  ^3}(-p^n))$ = ${\rm
  H}^{i}(\Pp ^3,\Oo _{\Pp ^3}(-p^n))$ = $0$ for $i\neq 3$. Let us
show that ${\rm H}^{i}({\bf G}/{\bf B},\Oo _{\pi}(-p^n)) = 0$ for $i<2$. Indeed, ${\rm H}^{0}({\bf G}/{\bf
  B},\Oo _{\pi}(-p^n))=0$, the line bundle $\Oo _{\pi}(-k)$ being not effective for any $k$.
 Let us consider ${\rm H}^1$. One has ${\rm R}^{\bullet}\pi _{\ast}\Oo _{\pi}(-p^n)={\sf D}^{p^n-2}{\sf
  N}[-1]$. Thus, ${\rm H}^{1}({\bf G}/{\bf B},\Oo _{\pi}(-p^n))={\rm
  H}^{0}(\Pp ^3,{\sf D}^{p^n-2}{\sf N})$. For any $k>0$ there is a
short exact sequence on ${\bf G}/{\bf B}$:
\begin{equation}\label{eq:seqforD^kN}
0\rightarrow \Oo _{\pi}(-k)\rightarrow \pi ^{\ast}{\sf D}^{k}{\sf
  N}\rightarrow \pi ^{\ast}{\sf D}^{k-1}{\sf N}\otimes \Oo
  _{\pi}(1)\rightarrow 0.
\end{equation}

It is obtained from the relative Euler sequence 
\begin{equation}
0\rightarrow \Oo _{\pi}(-1)\rightarrow \pi ^{\ast}{\sf N}\rightarrow
\Oo _{\pi}(1)\rightarrow 0
\end{equation}

by taking first its $k$-th symmetric power and then passing to the
dual (since the bundle ${\sf N}$ is symplectic, it is self-dual, that
is ${\sf N}={\sf N}^{\ast}$). We saw above that the line bundle $\Oo
_{\pi}(-k)$ did not have global sections. Using the sequence (\ref{eq:seqforD^kN}) for
$k=p^n-2,p^n-3,\dots ,1$ and descending induction, we see that ${\rm H}^{0}({\bf G}/{\bf B},\pi ^{\ast}{\sf D}^{k}{\sf
  N})={\rm H}^{0}(\Pp ^3,{\sf D}^{k}{\sf N})=0$. This implies ${\rm
  H}^{i}({\sf Q}_3,{{\sf F}^n}^{\ast}\Uu _2)=0$ for $i<2$. By Serre
duality ${\rm H}^3({\sf Q}_3,{{\sf F}^n}^{\ast}\Uu _2)={\rm H}^0({\sf
  Q}_3,{{\sf F}^n}^{\ast}\Uu _2^{\ast}\otimes \omega _{{\sf
    Q}_3})^{\ast}$. Recall that $\omega _{{\sf Q}_3} = \Oo _{{\sf
    Q}_3}(-3)$. Dualizing the sequence (\ref{eq:1stbasicseq}) and
tensoring it with $\omega _{{\sf Q}_3}$, we see that the bundle ${{\sf F}^n}^{\ast}\Uu _2^{\ast}\otimes \omega _{{\sf
    Q}_3}$ is an extension of two line bundles, both of which are
non-effective. Thus, ${\rm H}^3({\sf Q}_3,{{\sf F}^n}^{\ast}\Uu
_2)=0$. Finally, one gets a short exact sequence:
\begin{equation}
0\rightarrow {\rm H}^{2}({\sf Q}_3,{{\sf F}^n}^{\ast}\Uu _2)\rightarrow
{\rm H}^{2}({\bf G}/{\bf B},\Oo _{\pi}(-p^n))\rightarrow {\rm H}^{3}({\bf G}/{\bf B}, \pi
^{\ast}\Oo (-p^n))\rightarrow 0,
\end{equation}

and the statement follows.

\begin{remark}
The (non)-vanishing of the first cohomology group of a line bundle on
arbitrary flag variety was completely determined by H.H.Andersen
(cf. \cite{An}, 2.3). The line bundle $\Ll _{\chi}=\Oo _{\pi}(-p^n)$ corresponds to the
weight $\chi =p^n\omega _{\alpha}-p^n\omega _{\beta}$. Using
Andersen's criterion one immediately checks the vanishing of ${\rm
  H}^1({\bf G}/{\bf B},\Ll _{\chi})$.
\end{remark}

\end{proof}

Next step is the following vanishing:

\begin{lemma}\label{lem:2ndstep}
\begin{equation}
\Ext ^{i}( q^{\ast}({\sf F}^n_{\ast}({\sf
  D}^{p^n-2}\Uu _2(-1))\otimes \Oo _{{\sf Q}_3}(1))\otimes \Oo
  _{q}(-1),{\sf F}^n_{\ast}\Oo _{{\bf G}/{\bf B}}) = 0
\end{equation}
for $i>0$ and $n\geq 1$.

\end{lemma}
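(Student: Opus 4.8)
The plan is to adapt the method already used for Lemma~\ref{lem:D-affQ_3} and Proposition~\ref{prop:U_2prop}: peel off the bundle on the right via the $\Pp^1$-bundle sequence and reduce everything to the vanishing of cohomology of line bundles on ${\bf G}/{\bf B}$. Write $\Mm_n = {\sf F}^n_{\ast}({\sf D}^{p^n-2}\Uu_2(-1))\otimes \Oo_{{\sf Q}_3}(1)$, so the source of the $\Ext$ in question is $q^{\ast}\Mm_n \otimes \Oo_q(-1)$. Since $\Oo_q(-1)=\pi^{\ast}\Oo_{\Pp^3}(-1)$ and the two projections together give a closed embedding ${\bf G}/{\bf B}\hookrightarrow {\sf Q}_3\times \Pp^3$, I would first rewrite
\begin{equation*}
\Ext^{i}(q^{\ast}\Mm_n\otimes \Oo_q(-1),{\sf F}^n_{\ast}\Oo_{{\bf G}/{\bf B}}) = \Ext^{i}(q^{\ast}\Mm_n,{\sf F}^n_{\ast}\Oo_{{\bf G}/{\bf B}}\otimes \Oo_q(1)),
\end{equation*}
and then push forward along $q$. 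The key computation is $q_{\ast}\big({\sf F}^n_{\ast}\Oo_{{\bf G}/{\bf B}}\otimes \Oo_q(1)\big)$: by the projection formula this is ${\sf F}^n_{\ast}\big(\Oo_{{\bf G}/{\bf B}}\otimes \Oo_q(p^n)\big)$ pushed down, and since $\Oo_q(1)$ restricts to $\Oo_{\Pp^1}(1)$ on the fibres of $q$ one gets $q_{\ast}\Oo_q(p^n) = {\sf S}^{p^n}\Uu_2^{\ast}$ (up to a twist by $\det \Uu_2$), while $R^{>0}q_{\ast}=0$ in that range. Hence by adjunction the $\Ext$ group becomes $\Ext^i_{{\sf Q}_3}\big(\Mm_n, {\sf F}^n_{\ast}({\sf S}^{p^n}\Uu_2^{\ast}\otimes\det\Uu_2)\big)$, and unwinding the definition of $\Mm_n$ via adjunction for ${\sf F}^n$ turns this into a cohomology group on ${\sf Q}_3$ of the form ${\rm H}^i\big({\sf Q}_3,\ {\sf F}^n_{\ast}((\ldots)\otimes \Uu_2^{\vee}\text{-type bundles})\big)$.

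The second step is to resolve the tensor/divided powers of $\Uu_2$ that appear. As in Proposition~\ref{prop:U_2prop}, the bundle ${\sf D}^{p^n-2}\Uu_2$ and the products ${\sf D}^{p^n-2}\Uu_2\otimes {\sf S}^{p^n}\Uu_2^{\ast}$ should be broken up using the tautological sequence $(2)$ on ${\sf Q}_3$, namely $0\to \Uu_2\to {\sf V}\otimes\Oo\to \Uu_2^{\ast}\to 0$, together with the relative Euler sequence for $q$ (exactly the device used in Proposition~\ref{prop:U_2prop}, where $0\to \Oo_{\pi}(-k)\to \pi^{\ast}{\sf D}^k{\sf N}\to \pi^{\ast}{\sf D}^{k-1}{\sf N}\otimes\Oo_{\pi}(1)\to 0$ let a descending induction reduce everything to line bundles). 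The upshot should be that $\Ext^i(q^{\ast}\Mm_n\otimes\Oo_q(-1),{\sf F}^n_{\ast}\Oo_{{\bf G}/{\bf B}})$ is controlled by a finite list of cohomology groups ${\rm H}^i({\bf G}/{\bf B},\Ll_\chi)$ for explicit weights $\chi$ of the form $a\,p^n\omega_{\alpha}+b\,p^n\omega_{\beta}+(\text{bounded correction})$; all of these lie in the region where Andersen's criterion (invoked in the Remark above, cf.~\cite{An}, 2.3) and Serre duality force vanishing in positive degrees, because the $p^n$-scaled part dominates the bounded correction and pins the weight into a single Weyl chamber.

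The third step is bookkeeping: one must check that, as with Lemma~\ref{lem:D-affQ_3}, the derived-category spectral sequences collapse in the right way — i.e. that no surviving term of the form ${\rm H}^j$ for $j$ in a bad range sneaks in when one assembles the resolution of ${\sf D}^{p^n-2}\Uu_2$ with the expression for ${\sf F}^n_{\ast}\Oo_{{\bf G}/{\bf B}}\otimes\Oo_q(1)$. The main obstacle, I expect, is precisely this combinatorial control of degrees: the source bundle here is genuinely more complicated than $q^{\ast}{\sf F}^n_{\ast}\Oo_{{\sf Q}_3}$ (the case handled just before the lemma), so one cannot simply quote $\Ext^i({\sf F}^n_{\ast}\Oo_{{\sf Q}_3},{\sf F}^n_{\ast}\Oo_{{\sf Q}_3})=0$; instead one has to track the twist by ${\sf D}^{p^n-2}\Uu_2(-1)\otimes\Oo(1)$ through the Frobenius push-forward and verify that every line bundle $\Ll_\chi$ that ultimately appears has cohomology concentrated in a single degree that is compatible with the target range $i>0$ being empty. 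Once the weights are listed explicitly — which, as the introduction notes, is elementary in the rank-two case — the vanishing is immediate, and the lemma follows.
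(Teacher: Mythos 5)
Your opening step agrees with the paper: one twists by $\Oo_q(1)=\pi^{\ast}\Oo_{\Pp^3}(1)$, pushes forward along $q$ using ${\rm R}^{\bullet}q_{\ast}({\sf F}^n_{\ast}\Oo_{{\bf G}/{\bf B}}\otimes\pi^{\ast}\Oo_{\Pp^3}(1))={\sf F}^n_{\ast}{\sf S}^{p^n}\Uu_2^{\ast}$, and reduces to $\Ext^i$ on ${\sf Q}_3$ between two Frobenius pushforwards. But from that point on there is a genuine gap. After one application of the adjunction ${{\sf F}^n}^{!}(?)={{\sf F}^n}^{\ast}(?)\otimes\omega^{1-p^n}$ you are left with
\begin{equation*}
{\rm H}^{i}\bigl({\sf Q}_3,\,{{\sf F}^n}^{\ast}{\sf F}^n_{\ast}{\sf S}^{p^n}\Uu_2^{\ast}\otimes{\sf S}^{p^n-2}\Uu_2^{\ast}(2p^n-2)\bigr),
\end{equation*}
and the composite ${{\sf F}^n}^{\ast}{\sf F}^n_{\ast}$ does not disappear: ${\sf D}^{p^n-2}\Uu_2(-1)$ is not a Frobenius pullback, so no projection formula lets you absorb it inside ${\sf F}^n_{\ast}$, contrary to the shape ${\rm H}^i({\sf Q}_3,{\sf F}^n_{\ast}(\ldots))$ you assert. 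Your proposed devices (the tautological sequence for $\Uu_2$ and the relative Euler sequence for $q$) resolve symmetric and divided powers of $\Uu_2$, but they say nothing about ${{\sf F}^n}^{\ast}{\sf F}^n_{\ast}$ of a bundle. The paper's essential tool at exactly this point is the identification of the group above with ${\rm H}^{i}$ on ${\sf Q}_3\times{\sf Q}_3$ of $({\sf F}^n\times{\sf F}^n)^{\ast}(i_{\ast}\Oo_{\Delta})$ tensored with ${\sf S}^{p^n}\Uu_2^{\ast}\boxtimes{\sf S}^{p^n-2}\Uu_2^{\ast}(2p^n-2)$, followed by the Kapranov-type resolution of the diagonal (\ref{eq:resofdiag3dimquad}) and a term-by-term analysis. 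Your proposal never introduces this (or any substitute for it), so the reduction to line bundles is not actually achieved.

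The second problem is your final claim that every line bundle $\Ll_{\chi}$ which ultimately appears has cohomology forced to vanish in positive degrees because the $p^n$-scaled part ``pins the weight into a single Weyl chamber.'' This is false for the bundles that actually occur: for instance ${\sf S}^{p^n-2}\Uu_2^{\ast}(-2)$ has nonvanishing ${\rm H}^1$ (Proposition \ref{prop:Prop1forB_2} only gives vanishing for $i\neq 1$), and ${{\sf F}^n}^{\ast}\Uu_2$ has cohomology concentrated in degree $2$ (Proposition \ref{prop:U_2prop}), with $\Oo_{\pi}(-p^n)$ contributing a nonzero ${\rm H}^2$. The proof does not and cannot show that all positive-degree cohomology of the individual pieces vanishes; what it shows (Lemma \ref{eq:mainauxlemma}) is the weaker statement ${\rm H}^i=0$ for $i>-j$ on the $j$-th term of the complex, so that the surviving lower-degree cohomology is absorbed by the homological shift coming from the resolution of the diagonal. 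A blanket vanishing argument of the kind you sketch would therefore fail even if the reduction to line bundles were carried out; the bookkeeping you defer to the third step is precisely where the content of the lemma lies.
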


Clearly, Lemma \ref{lem:D-affQ_3} and Lemma \ref{lem:2ndstep} will imply
Theorem \ref{th:B_2}.

\begin{proof}
By the projection formula, one has:
\begin{eqnarray}
& \Ext ^{i}( q^{\ast}({\sf F}^n_{\ast}({\sf
  D}^{p^n-2}\Uu _2(-1))\otimes \Oo _{{\sf Q}_3}(1))\otimes \Oo
  _{q}(-1),{\sf F}^n_{\ast}\Oo _{{\bf G}/{\bf B}}) = \\
& = \Ext ^{i}({\sf F}^n_{\ast}({\sf D}^{p^n-2}\Uu _2(-1))\otimes \Oo _{{\sf Q}_3}(1),{\sf F}^n_{\ast}{\sf
  S}^{p^n}\Uu _2^{\ast}). &\nonumber
\end{eqnarray}

Indeed, 
\begin{eqnarray}
& {\rm R}^{\bullet}q _{\ast}({\sf F}^n_{\ast}\Oo _{{\bf G}/{\bf
  B}}\otimes \Oo _q(1)) = {\rm R}^{\bullet}q _{\ast}({\sf F}^n_{\ast}\Oo _{{\bf G}/{\bf
  B}}\otimes \pi ^{\ast}\Oo _{\Pp ^3}(1)) = {\rm R}^{\bullet}q
  _{\ast}{\sf F}^n_{\ast}\pi ^{\ast}\Oo _{\Pp ^3}(p^n) = \\
& = {\sf F}^n_{\ast}{\rm R}^{\bullet}q _{\ast}\pi ^{\ast}\Oo _{\Pp ^3}(p^n)
 = {\sf F}^n_{\ast}{\sf S}^{p^n}\Uu _2^{\ast}. &\nonumber
\end{eqnarray}
Recall that a right adjoint functor to ${\sf F}^n_{\ast}$ on a smooth
variety $X$ over $k$ is given by the formula:
\begin{equation}\label{eq:rightadjointequat}
{{\sf F}^n}^{!}(?) = {{\sf F}^n}^{\ast}(?)\otimes \omega _{X}^{1-p^{n}}.
\end{equation}

Therefore,
\begin{eqnarray}
& \Ext ^{i}({\sf F}^n_{\ast}({\sf
  D}^{p^n-2}\Uu _2(-1))\otimes \Oo _{{\sf Q}_3}(1),{\sf F}_{\ast}{\sf
  S}^{p^n}\Uu _2^{\ast}) = \\ 
& = \Ext ^{i}({\sf
  D}^{p^n-2}\Uu _2(-1),{{\sf F}^n}^{\ast}{\sf F}^n_{\ast}{\sf S}^{p^n}\Uu _2
  ^{\ast}\otimes \Oo _{{\sf Q}_3}(-p^n)\otimes \omega _{{\sf
  Q}_3}^{1-p^n}). &\nonumber
\end{eqnarray}

We have $\Oo _{{\sf Q}_3}(-p^n)\otimes \omega _{{\sf Q}_3}^{1-p^n} = \Oo
 _{{\sf Q}_3}(2p^n-3)$. Finally, 
\begin{equation}
\Ext ^{i}({\sf
  D}^{p^n-2}\Uu _2(-1),{{\sf F}^n}^{\ast}{\sf F}^n_{\ast}{\sf S}^{p^n}\Uu _2
  ^{\ast}\otimes \Oo _{{\sf Q}_3}(2p^n-3)) = {\rm H}^{i}({\sf Q}_3,,{{\sf
  F}^n}^{\ast}{\sf F}^n_{\ast}{\sf S}^{p^n}\Uu _2
  ^{\ast}\otimes {\sf S}^{p^n-2}\Uu _2^{\ast}(2p^n-2)),
\end{equation}

and there is an isomorphism of cohomology groups (Corollary 2.1, \cite{Sam}):
\begin{eqnarray}\label{eq:ExtgroupsforB_2}
& {\rm H}^{i}({\sf Q}_3,{{\sf F}^n}^{\ast}{\sf F}^n_{\ast}{\sf S}^{p^n}\Uu _2
  ^{\ast}\otimes {\sf S}^{p^n-2}\Uu _2^{\ast}(2p^n-2)) = \\ 
& = {\rm H}^{i}({\sf
  Q}_3\times {\sf Q}_3,({\sf F}^n\times {\sf F}^n)^{\ast}(i_{\ast}\Oo
  _{\Delta})\otimes ({\sf S}^{p^n}\Uu _2
  ^{\ast}\boxtimes {\sf S}^{p^n-2}\Uu _2^{\ast}(2p^n-2)). &\nonumber
\end{eqnarray}

Apply ${{\sf F}^n}^{\ast}\times {{\sf F}^n}^{\ast}$ to
the resolution (\ref{eq:resofdiag3dimquad}). Denote ${\rm
  C}^{\bullet}$ the complex, whose terms are ${\rm C}^j =
{{\sf F}^n}^{\ast}\Psi _{-j}\boxtimes {{\sf F}^n}^{\ast}\Oo _{{\sf Q}_3}(j)$ for
$j=-2,-1,0$ and ${\rm C}^{-3}={{\sf F}^n}^{\ast}\Uu _2\boxtimes {{\sf
    F}^n}^{\ast}\Uu _2(-2)$. Tensor ${\rm C}^{\bullet}$ with the
bundle ${\sf S}^{p^n}\Uu _2^{\ast}\boxtimes {\sf S}^{p^n-2}\Uu _2
^{\ast}(2p^n-2)$. Then the complex ${\rm C}^{\bullet}\otimes ({\sf S}^{p^n}\Uu _2^{\ast}\boxtimes {\sf S}^{p^n-2}\Uu _2
^{\ast}(2p^n-2))$ computes the cohomology group in the right hand side of
(\ref{eq:ExtgroupsforB_2}). 

\begin{lemma}\label{eq:mainauxlemma}
${\rm H}^{i}({\sf Q}_3\times {\sf Q}_3,{\rm C}^{j}\otimes ({\sf S}^{p^n}\Uu _2^{\ast}\boxtimes {\sf S}^{p^n-2}\Uu _2
^{\ast}(2p^n-2))=0$ for $i>-j$ and $n\geq 1$.
\end{lemma}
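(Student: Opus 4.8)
This is a Künneth computation, so the plan is first to apply the Künneth formula. Writing ${\rm C}^{j}\otimes({\sf S}^{p^n}\Uu_2^{\ast}\boxtimes{\sf S}^{p^n-2}\Uu_2^{\ast}(2p^n-2))={\sf A}_{j}\boxtimes{\sf B}_{j}$, one has ${\sf A}_{j}={{\sf F}^n}^{\ast}\Psi_{-j}\otimes{\sf S}^{p^n}\Uu_2^{\ast}$ and ${\sf B}_{j}={\sf S}^{p^n-2}\Uu_2^{\ast}((j+2)p^n-2)$ for $j=0,-1,-2$, and ${\sf A}_{-3}={{\sf F}^n}^{\ast}\Uu_2\otimes{\sf S}^{p^n}\Uu_2^{\ast}$, ${\sf B}_{-3}={{\sf F}^n}^{\ast}\Uu_2\otimes{\sf S}^{p^n-2}\Uu_2^{\ast}(-2)$; then over $k$ one gets ${\rm H}^{i}({\sf Q}_3\times{\sf Q}_3,{\sf A}_{j}\boxtimes{\sf B}_{j})=\bigoplus_{a+b=i}{\rm H}^{a}({\sf Q}_3,{\sf A}_{j})\otimes{\rm H}^{b}({\sf Q}_3,{\sf B}_{j})$. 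It therefore suffices, for each $j\in\{0,-1,-2,-3\}$, to make the summands with $a+b>-j$ vanish; concretely I would produce an integer $m_j$ with ${\rm H}^{a}({\sf Q}_3,{\sf A}_{j})=0$ for $a>m_j$ and ${\rm H}^{b}({\sf Q}_3,{\sf B}_{j})=0$ for $b>-j-m_j$ (supplemented, where the naive degree count is too weak, by the finer observation that the cohomology of the two factors is supported in complementary degrees).

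All the cohomology groups on ${\sf Q}_3$ I would compute by pulling back along the $\Pp^1$-bundle $q:{\bf G}/{\bf B}\rightarrow{\sf Q}_3$: since ${\rm R}^{\bullet}q_{\ast}\Oo_{{\bf G}/{\bf B}}=\Oo_{{\sf Q}_3}$, one has ${\rm H}^{\bullet}({\sf Q}_3,\Ff)={\rm H}^{\bullet}({\bf G}/{\bf B},q^{\ast}\Ff)$, and on ${\bf G}/{\bf B}$ every bundle occurring above is built from line bundles. Indeed, the dual of the sequence (\ref{eq:fundseqonX}) exhibits $q^{\ast}\Uu_2^{\ast}$ — and hence, after taking symmetric powers, $q^{\ast}{\sf S}^{m}\Uu_2^{\ast}$ — as filtered with line-bundle quotients $\Oo_{\pi}(t)\otimes\pi^{\ast}\Oo_{\Pp^3}(m-t)$, $0\le t\le m$; the sequence (\ref{eq:1stbasicseq}) plays the same role for $q^{\ast}{{\sf F}^n}^{\ast}\Uu_2$ (this is exactly the mechanism of Proposition \ref{prop:U_2prop}); the sequence (\ref{eq:Psi_2}) together with the restriction to ${\sf Q}_3$ of the Koszul resolution of $\Omega^{2}_{\Pp^4}(2)$ on $\Pp^4$ — both pulled back by ${{\sf F}^n}^{\ast}$ — reduces the term $\Psi_2$ to line bundles; and the relative Euler sequence (\ref{eq:seqforD^kN}), together with relative Serre duality for $\pi$, evaluates ${\rm R}^{\bullet}\pi_{\ast}$ of the relative twists $\Oo_{\pi}(-k)$ that arise in terms of divided powers ${\sf D}^{k}{\sf N}$ on $\Pp^3$, which are in turn resolved by $0\rightarrow\Oo_{\Pp^3}(-1)\rightarrow\Omega^{1}_{\Pp^3}(1)\rightarrow{\sf N}\rightarrow0$. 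Carrying out this bookkeeping expresses each of ${\rm H}^{\bullet}({\sf Q}_3,{\sf A}_{j})$ and ${\rm H}^{\bullet}({\sf Q}_3,{\sf B}_{j})$ through the cohomology of an explicit finite list, bounded independently of $n$, of line bundles of weight $p^{n}\lambda+\mu$ with $\mu$ in a bounded region, on $\Pp^3$, on ${\sf Q}_3$, and on ${\bf G}/{\bf B}$.

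The main obstacle is the very last step, reading the numbers $m_j$ off these line-bundle cohomology groups: in characteristic $p$ Bott vanishing fails, so the acyclicity in the relevant degrees of the line bundles $\Oo_{{\bf G}/{\bf B}}(p^{n}\lambda+\mu)$ produced above — most delicately those coming from the heaviest term ${\sf A}_{-2}={{\sf F}^n}^{\ast}\Psi_2\otimes{\sf S}^{p^n}\Uu_2^{\ast}$ and from the two ${{\sf F}^n}^{\ast}\Uu_2$-factors of ${\rm C}^{-3}$, which generalize Proposition \ref{prop:U_2prop} — has to be verified weight by weight. This is where the rank-two combinatorics of Andersen et al. (or the direct verification the paper carries out in the following section) enters; once every line bundle in the finite list is shown to be acyclic in the forbidden degrees $i>-j$, the Künneth splitting yields Lemma \ref{eq:mainauxlemma}, and together with Lemma \ref{lem:D-affQ_3} it yields Theorem \ref{th:B_2}.
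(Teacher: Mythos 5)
Your overall architecture coincides with the paper's: the lemma is proved by applying the K\"unneth formula to each ${\rm C}^{j}$ separately, and the required bound $i>-j$ is obtained from degree bounds on the cohomology of the two factors. Your identification of the factors is correct (in particular ${\sf B}_{j}={\sf S}^{p^n-2}\Uu _2^{\ast}((j+2)p^n-2)$ and the two $\Uu _2$-twisted factors for $j=-3$), and the bounds you would need, namely $m_0=0$ and $m_{-1}=m_{-2}=m_{-3}=1$, are exactly the ones the paper establishes. The problem is that you stop precisely where the content of the lemma begins. The statements your scheme requires --- ${\rm H}^{a}({\sf Q}_3,{{\sf F}^n}^{\ast}\Psi _2\otimes {\sf S}^{p^n}\Uu _2^{\ast})=0$ for $a>1$, ${\rm H}^{a}({\sf Q}_3,{{\sf F}^n}^{\ast}\Uu _2\otimes {\sf S}^{p^n}\Uu _2^{\ast})=0$ for $a>1$, ${\rm H}^{b}({\sf Q}_3,{{\sf F}^n}^{\ast}\Uu _2\otimes {\sf S}^{p^n-2}\Uu _2^{\ast}(-2))=0$ for $b>2$, and ${\rm H}^{b}({\sf Q}_3,{\sf S}^{p^n-2}\Uu _2^{\ast}(-2))=0$ for $b\neq 1$ --- are asserted to be verifiable ``weight by weight'' but are nowhere verified. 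Since the lemma is nothing but these vanishings packaged by K\"unneth, what you have written is a correct reduction together with a promissory note, not a proof; these vanishings are exactly Propositions \ref{prop:Prop1forB_2}--\ref{prop:Prop4forB_2} of the paper and constitute most of its length.

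There is also an inaccuracy in the reduction you propose for the deferred step. Filtering $q^{\ast}{\sf S}^{p^n}\Uu _2^{\ast}$ by the symmetric power of the dual of (\ref{eq:fundseqonX}) produces $p^n+1$ line-bundle quotients $\Oo _{\pi}(t)\otimes \pi ^{\ast}\Oo _{\Pp ^3}(p^n-t)$, so the resulting list of line bundles is \emph{not} bounded independently of $n$; to get a bounded list one must first trade the symmetric power for a single line bundle upstairs via ${\sf S}^{k}\Uu _2^{\ast}={\rm R}^{\bullet}q_{\ast}\pi ^{\ast}\Oo _{\Pp ^3}(k)$ and the projection formula, which is precisely the paper's device. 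The paper then avoids any weight-by-weight enumeration: Propositions \ref{prop:Prop1forB_2} and \ref{prop:Prop4forB_2} use Serre duality on ${\bf G}/{\bf B}$ to reduce to $\Oo _{\Pp ^3}(-p^n)$; Proposition \ref{prop:Prop3forB_2} uses the universal sequence on ${\sf Q}_3$ and the identity $q^{\ast}\Oo _{{\sf Q}_3}(1)=\Oo _{\pi}(1)\otimes \pi ^{\ast}\Oo _{\Pp ^3}(1)$ to exhibit ${\sf S}^{p^n}\Uu _2^{\ast}\otimes {{\sf F}^n}^{\ast}\Uu _2^{\ast}$ as an extension of two Kempf-acyclic bundles; and Proposition \ref{prop:Prop2forB_2} uses ${\mathcal T}_{{\sf Q}_3}={\sf S}^{2}\Uu _2^{\ast}$ (this is where oddness of $p$ enters) together with the symmetric square of the universal sequence. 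To complete your argument you must either carry out these (or equivalent) computations or import the relevant rank-two line-bundle cohomology results explicitly; as it stands the key step is missing.
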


Clearly, this implies ${\rm H}^{i}({\sf
  Q}_3\times {\sf Q}_3,({\sf F}^n\times {\sf F}^n)^{\ast}(i_{\ast}\Oo
  _{\Delta})\otimes ({\sf S}^{p^n}\Uu _2
  ^{\ast}\boxtimes {\sf S}^{p^n-2}\Uu _2^{\ast}(2p^n-2))=0$ for $i>0$
  and $n\geq 1$, and hence
  Lemma \ref{lem:2ndstep}. The proof of Lemma \ref{eq:mainauxlemma} is
  broken up into a series of propositions below.
\end{proof}

\begin{proposition}
${\rm H}^{i}({\sf Q}_3\times {\sf Q}_3,{\rm C}^{j}\otimes ({\sf S}^{p^n}\Uu _2^{\ast}\boxtimes {\sf S}^{p^n-2}\Uu _2
^{\ast}(2p^n-2))=0$ for $i>-j$, where $j=-1,0$ and $n\geq 1$.
\end{proposition}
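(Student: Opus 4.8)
The plan is to reduce both cases ($j=0$ and $j=-1$) to the vanishing of higher cohomology of dominant line bundles on ${\bf G}/{\bf B}$, using the Künneth formula on ${\sf Q}_3\times {\sf Q}_3$ together with the $\Pp ^1$-bundle $q\colon {\bf G}/{\bf B}=\Pp _{{\sf Q}_3}(\Uu _2)\rightarrow {\sf Q}_3$. As a preliminary I would record that, since $q_{\ast}\Oo _q(k)={\sf S}^k\Uu _2^{\ast}$ with vanishing higher direct images for $k\geq 0$, the projection formula together with the Leray spectral sequence identifies ${\rm H}^{i}({\sf Q}_3,{\sf S}^k\Uu _2^{\ast}\otimes \Oo _{{\sf Q}_3}(m))$ with ${\rm H}^{i}({\bf G}/{\bf B},\Oo _q(k)\otimes q^{\ast}\Oo _{{\sf Q}_3}(m))$; since $\Oo _q(1)=\pi ^{\ast}\Oo _{\Pp ^3}(1)$ and $q^{\ast}\Oo _{{\sf Q}_3}(1)$ correspond to the fundamental weights $\omega _{\alpha}$ and $\omega _{\beta}$, the line bundle on the right corresponds to the dominant weight $k\omega _{\alpha}+m\omega _{\beta}$ whenever $k,m\geq 0$, so its higher cohomology vanishes (by Kempf vanishing, valid in any characteristic, or equivalently by the explicit line-bundle computations available in the ${\bf B}_2$ case). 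Note also $p^n-2\geq 1$ since $p$ is odd, so all exponents occurring below are nonnegative.

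For $j=0$ I would use $\Psi _0=\Oo _{{\sf Q}_3}$, so that ${\rm C}^{0}=\Oo _{{\sf Q}_3}\boxtimes \Oo _{{\sf Q}_3}$ and the sheaf in question is ${\sf S}^{p^n}\Uu _2^{\ast}\boxtimes ({\sf S}^{p^n-2}\Uu _2^{\ast}(2p^n-2))$. The Künneth formula expresses its cohomology in terms of the groups ${\rm H}^{a}({\sf Q}_3,{\sf S}^{p^n}\Uu _2^{\ast})$ and ${\rm H}^{b}({\sf Q}_3,{\sf S}^{p^n-2}\Uu _2^{\ast}(2p^n-2))$, which by the preliminary (with $(k,m)=(p^n,0)$ and $(k,m)=(p^n-2,2p^n-2)$) are both concentrated in degree $0$; hence the cohomology of the box product vanishes for $i>0$, in particular for $i>-j=0$.

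For $j=-1$ I would use that $\Psi _1$ is the restriction to ${\sf Q}_3\subset \Pp ^4=\Pp ({\sf W})$ of $\Omega _{\Pp ^4}^1(1)$, so that restricting the Euler sequence gives $0\rightarrow \Psi _1\rightarrow {\sf W}^{\ast}\otimes \Oo _{{\sf Q}_3}\rightarrow \Oo _{{\sf Q}_3}(1)\rightarrow 0$; pulling this back by ${\sf F}^n$ and tensoring with ${\sf S}^{p^n}\Uu _2^{\ast}$ yields a short exact sequence whose outer terms ${\sf W}^{\ast}\otimes {\sf S}^{p^n}\Uu _2^{\ast}$ and ${\sf S}^{p^n}\Uu _2^{\ast}(p^n)$ have vanishing higher cohomology by the preliminary (cases $(p^n,0)$ and $(p^n,p^n)$). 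The associated long exact sequence then forces ${\rm H}^{i}({\sf Q}_3,{{\sf F}^n}^{\ast}\Psi _1\otimes {\sf S}^{p^n}\Uu _2^{\ast})=0$ for $i\geq 2$. Since ${\rm C}^{-1}=({{\sf F}^n}^{\ast}\Psi _1)\boxtimes \Oo _{{\sf Q}_3}(-p^n)$, after absorbing the twist the sheaf in question is $({{\sf F}^n}^{\ast}\Psi _1\otimes {\sf S}^{p^n}\Uu _2^{\ast})\boxtimes ({\sf S}^{p^n-2}\Uu _2^{\ast}(p^n-2))$; its second factor has vanishing higher cohomology (case $(p^n-2,p^n-2)$), so Künneth concentrates that factor in degree $0$ and the cohomology of the box product vanishes for $i\geq 2$, i.e. for $i>-j=1$.

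Because the sheaves occurring for $j=0,-1$ are so simple, I do not expect a genuine obstacle in this proposition: the only point requiring care is the bookkeeping of conventions — which convention is taken for $\Pp _S(\Ee)$, and the sign with which $\Oo _q(1)$ corresponds to $\omega _{\alpha}$ — so that the line bundles one ends up with are genuinely dominant and their cohomology is under control. The real difficulty lies in the remaining cases $j=-2$ and $j=-3$, where the Künneth factors no longer vanish in positive degrees and one must bring in the description of $\Psi _2$ from (\ref{eq:Psi_2}) and Proposition \ref{prop:U_2prop}.
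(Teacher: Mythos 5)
Your argument is correct and is essentially the paper's own proof: for $j=0$ both Künneth factors are pushforwards of dominant (effective) line bundles from ${\bf G}/{\bf B}$, hence have cohomology concentrated in degree $0$ by Kempf vanishing, and for $j=-1$ one uses the restricted Euler sequence $0\rightarrow \Psi_1\rightarrow {\sf W}^{\ast}\otimes\Oo_{{\sf Q}_3}\rightarrow \Oo_{{\sf Q}_3}(1)\rightarrow 0$ pulled back by ${\sf F}^n$ and tensored with ${\sf S}^{p^n}\Uu_2^{\ast}$, followed by Künneth. Your write-up is in fact slightly more careful than the paper's (which has a small typo, tensoring by ${\sf S}^{p^n-2}\Uu_2^{\ast}$ where ${\sf S}^{p^n}\Uu_2^{\ast}$ is meant), and your identification of the relevant weights $k\omega_{\alpha}+m\omega_{\beta}$ makes the dominance bookkeeping explicit.
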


\begin{proof}

Indeed, ${\sf S}^{k}\Uu _2^{\ast}={\rm
  R}^{\bullet}q_{\ast}\pi ^{\ast}\Oo _{\Pp ^3}(k)$ for $k\geq 0$, and ${\sf S}^{p^n-2}\Uu _2
^{\ast}(2p^n-2)={\rm R}^{\bullet}q_{\ast}\pi ^{\ast}\Oo _{\Pp
  ^3}(p^n-2)\otimes q^{\ast}\Oo _{{\sf Q}_3}(2p^n-2)$. Both line bundles
  $\pi ^{\ast}\Oo _{\Pp ^3}(p^n)$ and $\pi ^{\ast}\Oo _{\Pp
  ^3}(p^n-2)\otimes q^{\ast}\Oo _{{\sf Q}_3}(2p^n-2)$ are effective, so
  using the projection formula, the Kempf vanishing and the K\"unneth formula,
we see immediately that
\begin{equation}
{\rm H}^{i}({\sf Q}_3\times {\sf Q}_3,{\sf S}^{p^n}\Uu _2^{\ast}\boxtimes {\sf S}^{p^n-2}\Uu _2
^{\ast}(2p^n-2)) = 0
\end{equation}

for $i>0$. Further, the bundle $\Psi _1=\Omega ^1_{\Pp ^4}(1)\otimes
\Oo _{{\sf Q}_3}$ has a resolution:
\begin{equation}
0\rightarrow \Omega ^1_{\Pp ^4}(1)\otimes \Oo _{{\sf Q}_3}\rightarrow
{\sf W}^{\ast}\otimes \Oo _{{\sf Q}_3}\rightarrow \Oo _{{\sf
    Q}_3}(1)\rightarrow 0,
\end{equation}

Tensoring this sequence with ${\sf S}^{p^n-2}\Uu _2^{\ast}$ and using
once again the Kempf vanishing and the K\"unneth formula we get:

\begin{equation}
{\rm H}^{i}({\sf Q}_3\times {\sf Q}_3,({{\sf F}^n}^{\ast}\Psi _1\otimes
{\sf S}^{p^n}\Uu _2^{\ast})\boxtimes {\sf S}^{p^n-2}\Uu _2^{\ast}(p^n-2)) = 0.
\end{equation}

for $i>1$. 

\end{proof}

\begin{proposition}
${\rm H}^{i}({\sf Q}_3\times {\sf Q}_3,({{\sf F}^n}^{\ast}\Psi _2\otimes {\sf S}^{p^n}\Uu _2
^{\ast})\boxtimes {\sf S}^{p^n-2}\Uu _2^{\ast}(-2))=0$ for $i>2$ and
$n\geq 1$.
\end{proposition}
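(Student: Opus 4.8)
The plan is to transport everything to the total space ${\bf G}/{\bf B}$ along the $\Pp^1$-bundle $q$, reduce to cohomology of line bundles, and appeal to Kempf vanishing on ${\bf G}/{\bf B}$. Since all the sheaves involved are locally free, the K\"unneth formula reduces the statement to controlling the two factors ${\rm H}^{a}({\sf Q}_3,{{\sf F}^n}^{\ast}\Psi_2\otimes {\sf S}^{p^n}\Uu_2^{\ast})$ and ${\rm H}^{b}({\sf Q}_3,{\sf S}^{p^n-2}\Uu_2^{\ast}(-2))$. For the second, writing ${\sf S}^{p^n-2}\Uu_2^{\ast}={\rm R}^{\bullet}q_{\ast}\pi^{\ast}\Oo_{\Pp^3}(p^n-2)$ and using the projection formula gives ${\rm H}^{b}({\sf Q}_3,{\sf S}^{p^n-2}\Uu_2^{\ast}(-2))={\rm H}^{b}({\bf G}/{\bf B},\pi^{\ast}\Oo_{\Pp^3}(p^n-2)\otimes q^{\ast}\Oo_{{\sf Q}_3}(-2))$; the relation $q^{\ast}\Oo_{{\sf Q}_3}(-1)=\Oo_{\pi}(-1)\otimes\pi^{\ast}\Oo_{\Pp^3}(-1)$ coming from (\ref{eq:fundseqonX}) rewrites this line bundle as $\Oo_{\pi}(-2)\otimes\pi^{\ast}\Oo_{\Pp^3}(p^n-4)$, and since ${\rm R}^{\bullet}\pi_{\ast}\Oo_{\pi}(-2)=\mathrm{det}\,{\sf N}[-1]=\Oo_{\Pp^3}[-1]$ we obtain ${\rm H}^{b}({\sf Q}_3,{\sf S}^{p^n-2}\Uu_2^{\ast}(-2))={\rm H}^{b-1}(\Pp^3,\Oo_{\Pp^3}(p^n-4))$. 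This vanishes for every $b$ when $p^n=3$ (so the proposition holds there at once), and for $p^n\geq 5$ it is concentrated in degree $b=1$; hence by K\"unneth it suffices to prove ${\rm H}^{a}({\sf Q}_3,{{\sf F}^n}^{\ast}\Psi_2\otimes {\sf S}^{p^n}\Uu_2^{\ast})=0$ for $a\geq 2$.

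For this I would combine (\ref{eq:Psi_2}) with the restriction to ${\sf Q}_3$ of the Euler sequence on $\Pp^4$ and of its second exterior power, i.e. $0\to\Omega^2_{\Pp^4}(2)|_{{\sf Q}_3}\to\wedge^2{\sf W}^{\ast}\otimes\Oo_{{\sf Q}_3}\to\Omega^1_{\Pp^4}(2)|_{{\sf Q}_3}\to 0$ and $0\to\Omega^1_{\Pp^4}(2)|_{{\sf Q}_3}\to{\sf W}^{\ast}\otimes\Oo_{{\sf Q}_3}(1)\to\Oo_{{\sf Q}_3}(2)\to 0$. Applying ${{\sf F}^n}^{\ast}$, tensoring with ${\sf S}^{p^n}\Uu_2^{\ast}$, and pushing to ${\bf G}/{\bf B}$ by $q$ as above, the terms $\wedge^2{\sf W}^{\ast}\otimes{\sf S}^{p^n}\Uu_2^{\ast}$, ${\sf W}^{\ast}\otimes{\sf S}^{p^n}\Uu_2^{\ast}(p^n)$, ${\sf S}^{p^n}\Uu_2^{\ast}(2p^n)$ have the cohomology of the dominant line bundles $\Oo(p^n\omega_{\alpha})$, $\Oo(p^n\rho)$, $\Oo(p^n\omega_{\alpha}+2p^n\omega_{\beta})$ on ${\bf G}/{\bf B}$, hence no higher cohomology. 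Chasing the long exact sequences (together with ${\rm H}^{>0}({\sf Q}_3,{\sf S}^{p^n}\Uu_2^{\ast})=0$) gives ${\rm H}^{a}({\sf Q}_3,{{\sf F}^n}^{\ast}\Psi_2\otimes {\sf S}^{p^n}\Uu_2^{\ast})=0$ for $a\geq 3$ and identifies the surviving group ${\rm H}^{2}({\sf Q}_3,{{\sf F}^n}^{\ast}\Psi_2\otimes {\sf S}^{p^n}\Uu_2^{\ast})$ with ${\rm H}^{1}({\sf Q}_3,{{\sf F}^n}^{\ast}\Psi_1\otimes {\sf S}^{p^n}\Uu_2^{\ast}(p^n))$ (using $\Omega^1_{\Pp^4}(2)|_{{\sf Q}_3}=\Psi_1(1)$); equivalently, with the cokernel of the multiplication map ${\sf W}^{\ast}\otimes{\rm H}^{0}({\bf G}/{\bf B},\Oo(p^n\rho))\to{\rm H}^{0}({\bf G}/{\bf B},\Oo(p^n\omega_{\alpha}+2p^n\omega_{\beta}))$ induced by ${{\sf F}^n}^{\ast}$ of the tautological evaluation ${\sf W}^{\ast}\otimes\Oo_{{\bf G}/{\bf B}}\to\Oo(\omega_{\beta})$.

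That this last map is onto is the main obstacle. It cannot be obtained by a crude d\'evissage: filtering ${{\sf F}^n}^{\ast}(q^{\ast}\Psi_1\otimes\Oo(\rho))$ by line bundles turns up, among the subquotients, the line bundle of weight $-p^n\omega_{\alpha}+2p^n\omega_{\beta}$, for which that weight plus $\rho$ is regular but becomes dominant only after the length-one reflection $s_{\alpha}$, so its cohomology is supported in positive degrees and a term-by-term bound on ${\rm H}^1$ loses information. One must instead exploit the genuine ${\bf P}_{\alpha}$-module structure of $\Psi_1$ — keeping track of the connecting homomorphisms — or run the bundle through the cohomology theory of line bundles on the rank-two flag variety ${\bf G}/{\bf B}$ (Andersen's criterion for ${\rm H}^1$, or Langer's explicit Frobenius decomposition on quadrics) to check that the contribution of this ``bad'' weight cancels. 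With this vanishing in hand, the K\"unneth reduction of the first paragraph concludes the proof.
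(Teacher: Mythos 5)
Your first paragraph is sound: the K\"unneth reduction is the same as the paper's, and your computation of the second factor, ${\rm H}^{b}({\sf Q}_3,{\sf S}^{p^n-2}\mathcal{U}_2^{\ast}(-2))={\rm H}^{b-1}(\mathbb{P}^3,\mathcal{O}_{\mathbb{P}^3}(p^n-4))$, concentrated in degree $b=1$, is a direct-image version of what the paper obtains by Serre duality on ${\bf G}/{\bf B}$ in Proposition \ref{prop:Prop1forB_2}. You also correctly isolate what remains: ${\rm H}^{a}({\sf Q}_3,{{\sf F}^n}^{\ast}\Psi_2\otimes{\sf S}^{p^n}\mathcal{U}_2^{\ast})=0$ for $a\geq 2$, with $a=2$ genuinely needed at $i=3$ since the degree-one factor is nonzero once $p^n\geq 5$. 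But that is exactly where your argument stops. The Euler-sequence d\'evissage gives you $a\geq 3$ and identifies ${\rm H}^{2}$ with the cokernel of a multiplication map; you then observe (correctly) that a term-by-term bound on line-bundle subquotients loses the needed information, and you only gesture at exploiting the ${\bf P}_{\alpha}$-structure or invoking Andersen's criterion or Langer's decomposition without carrying any of this out. As written, the proposal does not prove the proposition: the case $i=3$ is left open, and it is the only case with content.

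The paper closes precisely this gap (its Proposition \ref{prop:Prop2forB_2}) by a different d\'evissage of $\Omega^1_{\mathbb{P}^4}(2)|_{{\sf Q}_3}$. Instead of the Euler sequence, it uses the adjunction sequence for ${\sf Q}_3\subset\mathbb{P}^4$ together with the odd-characteristic self-duality $\mathcal{T}_{{\sf Q}_3}(-1)=\Omega^1_{{\sf Q}_3}(1)$ to produce
\begin{equation*}
0\rightarrow\mathcal{O}_{{\sf Q}_3}\rightarrow\Omega^1_{\mathbb{P}^4}(2)\otimes\mathcal{O}_{{\sf Q}_3}\rightarrow\mathcal{T}_{{\sf Q}_3}\rightarrow 0,
\end{equation*}
identifies $\mathcal{T}_{{\sf Q}_3}={\sf S}^2\mathcal{U}_2^{\ast}$ (again using that $p$ is odd), and resolves the latter by the Frobenius pull-back of the symmetric square of the tautological sequence. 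This reduces the required vanishing to Proposition \ref{prop:Prop3forB_2} and to ${\rm H}^{3}({\bf G}/{\bf B},\mathcal{O}_{\pi}(-p^n))=0$, both elementary, with no appeal to Andersen's ${\rm H}^1$ criterion or to Langer's explicit decomposition. If you want to keep your route, you must actually prove the surjectivity of ${\sf W}^{\ast}\otimes{\rm H}^{0}({\bf G}/{\bf B},\mathcal{O}(p^n\rho))\rightarrow{\rm H}^{0}({\bf G}/{\bf B},\mathcal{O}(p^n\omega_{\alpha}+2p^n\omega_{\beta}))$; that statement is the whole difficulty restated, not a reduction of it.
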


\begin{proof}
Propositions \ref{prop:Prop1forB_2} and \ref{prop:Prop2forB_2} below ensure
that for $n\geq 1$ one has ${\rm H}^{i}({{\sf Q}_3},{\sf S}^{p^n-2}\Uu ^{\ast}(-2)) = 0$ for
$i\neq 1$ and ${\rm H}^{i}({{\sf Q}_3},{{\sf F}^n}^{\ast}\Psi _2\otimes {\sf S}^{p^n}\Uu
_2^{\ast}) = 0$ for $i>1$. The K\"unneth formula finishes the proof.
\end{proof}

\begin{proposition}
${\rm H}^{i}({\sf Q}_3\times {\sf Q}_3,({{\sf
  F}^n}^{\ast}\Uu _2\otimes {\sf S}^{p^n}\Uu _2^{\ast})\boxtimes ({{\sf
  F}^n}^{\ast}\Uu _2\otimes {\sf S}^{p^n-2}\Uu _2^{\ast}(-2))=0$ for
  $i>3$ and $n\geq 1$.
\end{proposition}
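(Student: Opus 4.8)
The plan is to combine the Künneth formula with the projective–bundle reductions used throughout this section. Write $A={{\sf F}^n}^{\ast}\Uu_2\otimes{\sf S}^{p^n}\Uu_2^{\ast}$ and $B={{\sf F}^n}^{\ast}\Uu_2\otimes{\sf S}^{p^n-2}\Uu_2^{\ast}(-2)$ for the two Künneth factors, so that ${\rm H}^{i}({\sf Q}_3\times{\sf Q}_3,A\boxtimes B)=\bigoplus_{a+b=i}{\rm H}^{a}({\sf Q}_3,A)\otimes{\rm H}^{b}({\sf Q}_3,B)$. Since $\dim{\sf Q}_3=3$, it is enough to establish ${\rm H}^{2}({\sf Q}_3,A)={\rm H}^{3}({\sf Q}_3,A)=0$ together with ${\rm H}^{3}({\sf Q}_3,B)=0$: then every summand with $a+b>3$ forces a vanishing factor (note that ${\rm H}^{2}({\sf Q}_3,B)$ need not vanish, but it is only ever paired with ${\rm H}^{1}({\sf Q}_3,A)$ at $a+b=3$).

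To compute the cohomology of $A$ and $B$ I would argue exactly as in the proof of Proposition \ref{prop:U_2prop}. Since $\Oo_q(-1)=\pi^{\ast}\Oo_{\Pp^3}(-1)$, the sheaf ${\sf S}^{m}\Uu_2^{\ast}$ is $q_{\ast}$ of the relatively ample line bundle $\pi^{\ast}\Oo_{\Pp^3}(m)$ and has no higher direct images for $m\geq 0$; so the projection formula and the Leray spectral sequence give ${\rm H}^{\bullet}({\sf Q}_3,A)={\rm H}^{\bullet}({\bf G}/{\bf B},\,q^{\ast}{{\sf F}^n}^{\ast}\Uu_2\otimes\pi^{\ast}\Oo_{\Pp^3}(p^n))$ and likewise ${\rm H}^{\bullet}({\sf Q}_3,B)={\rm H}^{\bullet}({\bf G}/{\bf B},\,q^{\ast}{{\sf F}^n}^{\ast}\Uu_2\otimes\pi^{\ast}\Oo_{\Pp^3}(p^n-2)\otimes q^{\ast}\Oo_{{\sf Q}_3}(-2))$. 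Tensoring the fundamental sequence (\ref{eq:1stbasicseq}) by the relevant line bundle turns the sub‑bundle term into $\Oo_{{\bf G}/{\bf B}}$ (for $A$) resp.\ $\omega_{{\bf G}/{\bf B}}$ (for $B$), which have cohomology only in degree $0$ resp.\ $4$, while the quotient term becomes a line bundle of the shape $\Oo_{\pi}(-r)\otimes\pi^{\ast}\Oo_{\Pp^3}(s)$. Pushing that forward along the $\Pp^1$‑bundle $\pi$ and using ${\rm R}^{\bullet}\pi_{\ast}\Oo_{\pi}(-r)={\sf D}^{r-2}{\sf N}[-1]$, the problem reduces to the cohomology of ${\sf D}^{p^n-2}{\sf N}(p^n)$ and of ${\sf D}^{p^n}{\sf N}(p^n-4)$ on $\Pp^3$: concretely one needs ${\rm H}^{1}(\Pp^3,{\sf D}^{p^n-2}{\sf N}(p^n))={\rm H}^{2}(\Pp^3,{\sf D}^{p^n-2}{\sf N}(p^n))=0$ for the $A$‑part, and $\dim{\rm H}^{2}(\Pp^3,{\sf D}^{p^n}{\sf N}(p^n-4))=1$ (with the relevant boundary map onto $k$) for ${\rm H}^{3}({\sf Q}_3,B)=0$; this last point may alternatively be read off via Serre duality on ${\sf Q}_3$, where it becomes the statement that ${{\sf F}^n}^{\ast}\Uu_2^{\ast}\otimes{\sf D}^{p^n-2}\Uu_2(-1)$ has no global sections.

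The remaining computations on $\Pp^3$ I would carry out inductively from the divided‑power sequences. A single Koszul step applied to $0\to\Oo_{\Pp^3}(-1)\to\Omega^1_{\Pp^3}(1)\to{\sf N}\to 0$ gives $0\to\Oo_{\Pp^3}(-1)\otimes{\sf S}^{k-1}\Omega^1_{\Pp^3}(k-1)\to{\sf S}^{k}\Omega^1_{\Pp^3}(k)\to{\sf S}^{k}{\sf N}\to 0$, and pushing (\ref{eq:seqforD^kN}) forward along $\pi$ gives $0\to{\sf D}^{k}{\sf N}\to{\sf D}^{k-1}{\sf N}\otimes{\sf N}\to{\sf D}^{k-2}{\sf N}\to 0$; iterating these brings everything down to the cohomology of $\Oo_{\Pp^3}(l)$ and of ${\sf S}^{m}\Omega^1_{\Pp^3}(l)$, which is classical and, in the twist ranges that occur, the same as in characteristic zero. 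Equivalently, the whole computation can be phrased on ${\bf G}/{\bf B}$: the vanishings above amount to ${\rm H}^{2}({\bf G}/{\bf B},\Ll_{p^n\alpha})={\rm H}^{3}({\bf G}/{\bf B},\Ll_{p^n\alpha})=0$ and $\dim{\rm H}^{3}({\bf G}/{\bf B},\Ll_{(p^n-4)\alpha-3\beta})=1$, which follow from the known description of line bundle cohomology on the ${\bf B}_2$ flag variety (Andersen's criterion for ${\rm H}^{1}$ and its extensions to higher cohomology), though for consistency with the paper I would reprove them by the geometric argument above.

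I expect the main obstacle to be precisely this last input: the positive‑characteristic cohomology of the divided‑power bundles ${\sf D}^{k}{\sf N}$ with the intermediate twists $p^n$ and $p^n-4$. Bott vanishing can fail in characteristic $p$, so one cannot simply transplant the characteristic‑zero answer, and one must either check by hand that these twists keep the relevant groups in a safe range or run the divided‑power induction carefully, keeping track of how ${\sf D}^{k}$ differs from ${\sf S}^{k}$ once $k\geq p$. A secondary nuisance is the canonical‑bundle term appearing in the short exact sequence for the $B$‑factor, which prevents ${\rm H}^{3}({\sf Q}_3,B)$ from being literally the cohomology of a single line bundle; the Serre‑duality reformulation on ${\sf Q}_3$ is the clean way around it.
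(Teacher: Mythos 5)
Your first paragraph coincides with the paper's proof of this proposition: by K\"unneth the statement reduces to ${\rm H}^{i}({\sf Q}_3,{{\sf F}^n}^{\ast}\Uu _2\otimes {\sf S}^{p^n}\Uu _2^{\ast})=0$ for $i>1$ and ${\rm H}^{i}({\sf Q}_3,{{\sf F}^n}^{\ast}\Uu _2\otimes {\sf S}^{p^n-2}\Uu _2^{\ast}(-2))=0$ for $i>2$, and these two inputs are exactly Propositions \ref{prop:Prop3forB_2} and \ref{prop:Prop4forB_2} of the paper. Up to that point your reduction is correct and complete, and your bookkeeping of which K\"unneth summands need which vanishing is accurate.

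Where you diverge is in how you propose to supply those two inputs, and there your argument has a gap that you yourself flag. The paper never touches the divided powers of $\sf N$ on $\Pp ^3$ at this stage: for the first factor it tensors the Frobenius pullback of the universal sequence (\ref{eq:F^*oftautseqonQ_3}) with ${\sf S}^{p^n}\Uu _2^{\ast}$, reducing to ${\rm H}^{>0}({\sf Q}_3,{\sf S}^{p^n}\Uu _2^{\ast}\otimes {{\sf F}^n}^{\ast}\Uu _2^{\ast})=0$, which follows from the three-term filtration $0\rightarrow \Oo _{{\sf Q}_3}(p^n)\rightarrow {\sf S}^{p^n}\Uu _2^{\ast}\otimes {{\sf F}^n}^{\ast}\Uu _2^{\ast}\rightarrow {\sf S}^{2p^n}\Uu _2^{\ast}\rightarrow 0$ obtained by pushing down the dual of (\ref{eq:1stbasicseq}) twisted by $\pi ^{\ast}\Oo _{\Pp ^3}(p^n)$ and using (\ref{eq:relbetlinebun}); both graded pieces are Kempf-vanishing. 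For the second factor it uses Serre duality on ${\bf G}/{\bf B}$ to reduce to ${\rm H}^1({\bf G}/{\bf B},\pi ^{\ast}\Oo _{\Pp ^3}(-p^n)\otimes q^{\ast}{{\sf F}^n}^{\ast}\Uu _2^{\ast})=0$, which again falls out of the same two exact sequences. Your route instead terminates at ${\rm H}^{1}$ and ${\rm H}^{2}$ of ${\sf D}^{p^n-2}{\sf N}(p^n)$ and at the claim that ${\rm H}^{2}(\Pp ^3,{\sf D}^{p^n}{\sf N}(p^n-4))$ is one-dimensional with a nonzero connecting map onto ${\rm H}^{4}({\bf G}/{\bf B},\omega _{{\bf G}/{\bf B}})$; none of these is verified, and, as you note, the discrepancy between ${\sf D}^{k}$ and ${\sf S}^{k}$ for $k\geq p$ is exactly where such an induction can fail in characteristic $p$. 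So as a proof of the proposition modulo the two cited single-factor vanishings your write-up is correct and is the paper's argument; as a self-contained proof it is incomplete precisely at the divided-power cohomology, which the paper's own proofs of Propositions \ref{prop:Prop3forB_2} and \ref{prop:Prop4forB_2} are structured to avoid.
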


\begin{proof}
Similarly to the above lemma, Propositions \ref{prop:Prop3forB_2} and
\ref{prop:Prop4forB_2} below imply that for $n\geq 1$ one has ${\rm H}^{i}({{\sf Q}_3},{{\sf
  F}^n}^{\ast}\Uu _2\otimes {\sf S}^{p^n}\Uu _2^{\ast})=0$ for $i>1$
and ${\rm H}^{i}({{\sf Q}_3},{{\sf F}^n}^{\ast}\Uu _2\otimes {\sf
  S}^{p^n-2}\Uu _2^{\ast}(-2))=0$ for $i>2$. We are done by K\"unneth.
\end{proof}

\section{\bf End of the proof}

\begin{proposition}\label{prop:Prop1forB_2}
${\rm H}^{i}({{\sf Q}_3},{\sf S}^{p^n-2}\Uu _2^{\ast}(-2)) = 0$ for
$i\neq 1$ and $n\geq 1$.
\end{proposition}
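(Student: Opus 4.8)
The plan is to reduce the statement to a cohomology computation on $\Pp^3$ via the projective bundle $q:{\bf G}/{\bf B}=\Pp_{{\sf Q}_3}(\Uu_2)\rightarrow {\sf Q}_3$, exactly as in the proof of Proposition \ref{prop:U_2prop}. Since ${\sf S}^{p^n-2}\Uu_2^{\ast}={\rm R}^{\bullet}q_{\ast}\,\pi^{\ast}\Oo_{\Pp^3}(p^n-2)$ (and this higher direct image is concentrated in degree $0$ because $\pi^{\ast}\Oo_{\Pp^3}(p^n-2)$ is relatively globally generated along $q$), we get
\begin{equation}
{\rm H}^{i}({\sf Q}_3,{\sf S}^{p^n-2}\Uu_2^{\ast}(-2)) = {\rm H}^{i}({\bf G}/{\bf B},\pi^{\ast}\Oo_{\Pp^3}(p^n-2)\otimes q^{\ast}\Oo_{{\sf Q}_3}(-2)).
\end{equation}
The line bundle on the right corresponds to the weight $(p^n-2)\omega_{\alpha}-2\omega_{\beta}$; equivalently, write it as $\pi^{\ast}\Oo_{\Pp^3}(p^n-2)\otimes q^{\ast}\Oo_{{\sf Q}_3}(-2)$ and push forward along $q$ or $\pi$ to land on a rank-two-root-system flag variety where line bundle cohomology is completely understood.

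The cleanest route I would take is to push forward along $\pi:{\bf G}/{\bf B}=\Pp({\sf N})\rightarrow\Pp^3$ instead. We have $q^{\ast}\Oo_{{\sf Q}_3}(-2)=\Oo_{\pi}(-2)$, so the bundle is $\Oo_{\pi}(-2)\otimes\pi^{\ast}\Oo_{\Pp^3}(p^n-2)$, and by relative Serre duality for the $\Pp^1$-bundle $\pi$ one has ${\rm R}^{\bullet}\pi_{\ast}\Oo_{\pi}(-2)=\det{\sf N}[-1]=\Oo_{\Pp^3}[-1]$ (using that $\sf N$ is symplectic, hence has trivial determinant). Therefore
\begin{equation}
{\rm R}^{\bullet}\pi_{\ast}\bigl(\Oo_{\pi}(-2)\otimes\pi^{\ast}\Oo_{\Pp^3}(p^n-2)\bigr)=\Oo_{\Pp^3}(p^n-2)[-1],
\end{equation}
and the Leray spectral sequence degenerates to give ${\rm H}^{i}({\bf G}/{\bf B},\Oo_{\pi}(-2)\otimes\pi^{\ast}\Oo_{\Pp^3}(p^n-2))={\rm H}^{i-1}(\Pp^3,\Oo_{\Pp^3}(p^n-2))$. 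For $n\geq 1$ we have $p^n-2\geq -1$, so $\Oo_{\Pp^3}(p^n-2)$ has vanishing cohomology in all degrees except degree $0$ (and when $p^n-2\geq 0$ only ${\rm H}^0$ is nonzero; when $n=1,p=3$ the sheaf $\Oo_{\Pp^3}(1)$ still has cohomology only in degree $0$). Hence ${\rm H}^{i}({\sf Q}_3,{\sf S}^{p^n-2}\Uu_2^{\ast}(-2))$ vanishes for all $i\neq 1$, which is the claim.

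The only genuine point to be careful about is the identification ${\sf S}^{p^n-2}\Uu_2^{\ast}={\rm R}^{0}q_{\ast}\pi^{\ast}\Oo_{\Pp^3}(p^n-2)$ with vanishing of the higher $q$-direct images — but this is immediate since along the fibers of $q$ the bundle $\pi^{\ast}\Oo_{\Pp^3}(1)$ restricts to $\Oo_{\Pp^1}(1)$, so $\pi^{\ast}\Oo_{\Pp^3}(p^n-2)$ restricts to $\Oo_{\Pp^1}(p^n-2)$, which has no higher cohomology for $n\geq 1$; and the $\Oo_{{\sf Q}_3}$-module structure of the push-forward is $\mathrm{S}^{p^n-2}$ of $q_{\ast}\Oo_{\pi}(1)=\Uu_2^{\ast}$. (For $n=1$ one checks directly that ${\sf S}^{p-2}\Uu_2^{\ast}(-2)$ has cohomology only in degree $1$ — e.g. $p=3$ gives $\Uu_2^{\ast}(-2)$, whose ${\rm H}^{\bullet}$ one reads off from the sequence (2) twisted by $\Oo_{{\sf Q}_3}(-2)$.) So there is no real obstacle here; the proposition is essentially a bookkeeping consequence of the projective-bundle formalism already set up in the previous section.
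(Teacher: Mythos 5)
Your overall strategy---identify ${\sf S}^{p^n-2}\Uu_2^{\ast}(-2)$ with the pushforward of a line bundle from ${\bf G}/{\bf B}$ and then compute on $\Pp^3$ by pushing down along $\pi$---is sound, and is essentially the Serre-dual of what the paper does: the paper applies Serre duality on the four-fold ${\bf G}/{\bf B}$ to reduce to ${\rm H}^{4-i}(\Pp^3,\Oo_{\Pp^3}(-p^n))^{\ast}$, whereas you apply relative duality along the $\Pp^1$-fibration $\pi$; the two computations are exchanged by Serre duality on $\Pp^3$. However, there is a concrete error in your line-bundle bookkeeping: $q^{\ast}\Oo_{{\sf Q}_3}(-2)$ is \emph{not} $\Oo_{\pi}(-2)$. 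Taking determinants in the sequence (\ref{eq:fundseqonX}) (equivalently, see (\ref{eq:relbetlinebun})) gives $q^{\ast}\Oo_{{\sf Q}_3}(-1)=\Oo_{\pi}(-1)\otimes\pi^{\ast}\Oo_{\Pp^3}(-1)$; in weight terms $\Oo_{\pi}(-1)$ corresponds to $\omega_{\alpha}-\omega_{\beta}$, not to $-\omega_{\beta}$. Hence the bundle you push forward is $\Oo_{\pi}(-2)\otimes\pi^{\ast}\Oo_{\Pp^3}(p^n-4)$, and the correct output of relative duality is
\begin{equation*}
{\rm R}^{\bullet}\pi_{\ast}\bigl(\pi^{\ast}\Oo_{\Pp^3}(p^n-2)\otimes q^{\ast}\Oo_{{\sf Q}_3}(-2)\bigr)=\Oo_{\Pp^3}(p^n-4)[-1],
\end{equation*}
not $\Oo_{\Pp^3}(p^n-2)[-1]$. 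Your own sanity check at $p=3$, $n=1$ detects the slip: $\Uu_2^{\ast}(-2)$ has no cohomology at all on ${\sf Q}_3$ (twist the sequence (2) by $\Oo_{{\sf Q}_3}(-2)$ and use that $\Oo_{{\sf Q}_3}(-2)$ and $\Uu_2(-2)=\Uu_2^{\ast}\otimes\omega_{{\sf Q}_3}$ are acyclic), while your formula would predict ${\rm H}^1$ of dimension $4$.

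The proposition survives the correction: since $p$ is odd and $n\geq 1$ one has $p^n-4\geq -1>-4$, so $\Oo_{\Pp^3}(p^n-4)$ has cohomology concentrated in degree $0$ (it is zero altogether when $p^n=3$), whence ${\rm H}^{i}({\sf Q}_3,{\sf S}^{p^n-2}\Uu_2^{\ast}(-2))={\rm H}^{i-1}(\Pp^3,\Oo_{\Pp^3}(p^n-4))=0$ for $i\neq 1$. The remaining ingredients of your argument---the identification ${\sf S}^{p^n-2}\Uu_2^{\ast}={\rm R}^{0}q_{\ast}\pi^{\ast}\Oo_{\Pp^3}(p^n-2)$ with vanishing higher direct images, the projection formula, and ${\rm R}^{\bullet}\pi_{\ast}\Oo_{\pi}(-2)=\det{\sf N}[-1]$ with $\det{\sf N}=\Oo_{\Pp^3}$---are all fine and consistent with the setup of Lemma \ref{lem:P^1Lemma}. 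So the proof is correct once the identification of $q^{\ast}\Oo_{{\sf Q}_3}(-2)$ is fixed; as written, the displayed answer ${\rm H}^{i-1}(\Pp^3,\Oo_{\Pp^3}(p^n-2))$ is wrong even though it happens to vanish in the same range of degrees.
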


\begin{proof}
One has  ${\rm H}^{i}({{\sf Q}_3},{\sf S}^{p^n-2}\Uu _2^{\ast}(-2)) =
{\rm H}^{i}({\bf G}/{\bf B},\pi ^{\ast}\Oo _{\Pp ^3}(p^n-2)\otimes q^{\ast}\Oo
_{{\sf Q}_3}(-2))$. Recall that $\omega _{{\bf G}/{\bf B}}=\pi
  ^{\ast}\Oo _{\Pp ^3}(-2)\otimes q^{\ast}\Oo _{{\sf Q}_3}(-2)$. By Serre duality one has
\begin{equation}
{\rm H}^{i}({\bf G}/{\bf B},\pi ^{\ast}\Oo _{\Pp ^3}(p^n-2)\otimes q^{\ast}\Oo
_{{\sf Q}_3}(-2))={\rm H}^{4-i}({\bf G}/{\bf B},\pi ^{\ast}\Oo _{\Pp ^3}(-p^n))^{\ast},
\end{equation}

and the last group is isomorphic to ${\rm H}^{4-i}(\Pp ^3,\Oo _{\Pp
  ^3}(-p^n))^{\ast}$ that can be non-zero only if $i=1$.

\end{proof}

\begin{proposition}\label{prop:Prop2forB_2}
${\rm H}^{i}({{\sf Q}_3},{{\sf F}^n}^{\ast}\Psi _2\otimes {\sf S}^{p^n}\Uu
_2^{\ast}) = 0$ for $i>1$ and $n\geq 1$.
\end{proposition}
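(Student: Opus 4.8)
The plan is to reduce the vanishing of ${\rm H}^{i}({\sf Q}_3,{{\sf F}^n}^{\ast}\Psi_2\otimes {\sf S}^{p^n}\Uu_2^{\ast})$ for $i>1$ to cohomology computations for line bundles on ${\bf G}/{\bf B}$, exactly in the spirit of the preceding propositions. First I would use the defining sequence (\ref{eq:Psi_2}), namely
\begin{equation}
0\rightarrow \Omega^2_{\Pp^4}(2)\otimes \Oo_{{\sf Q}_3}\rightarrow \Psi_2\rightarrow \Oo_{{\sf Q}_3}\rightarrow 0,
\end{equation}
pull it back along ${{\sf F}^n}$, and tensor with ${\sf S}^{p^n}\Uu_2^{\ast}$. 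This produces a short exact sequence whose associated long exact sequence reduces the claim to two statements: that ${\rm H}^{i}({\sf Q}_3,{\sf S}^{p^n}\Uu_2^{\ast})=0$ for $i>1$, and that ${\rm H}^{i}({\sf Q}_3,{{\sf F}^n}^{\ast}(\Omega^2_{\Pp^4}(2)\otimes\Oo_{{\sf Q}_3})\otimes {\sf S}^{p^n}\Uu_2^{\ast})=0$ for $i>1$. The first is immediate: ${\sf S}^{p^n}\Uu_2^{\ast}={\rm R}^{\bullet}q_{\ast}\pi^{\ast}\Oo_{\Pp^3}(p^n)$ with $\pi^{\ast}\Oo_{\Pp^3}(p^n)$ effective, so by the projection formula and the Kempf vanishing its higher cohomology vanishes (indeed it is concentrated in degree $0$).

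For the second summand I would resolve $\Omega^2_{\Pp^4}(2)$ using (a truncation of) the Koszul/Euler complex on $\Pp^4$ restricted to ${\sf Q}_3$: from $0\to\Omega^2_{\Pp^4}(2)\to \wedge^2{\sf W}^{\ast}\otimes\Oo\to \Omega^1_{\Pp^4}(2)\to 0$ and the known resolution $0\to\Omega^1_{\Pp^4}(1)\to{\sf W}^{\ast}\otimes\Oo\to\Oo(1)\to 0$, one gets $\Omega^2_{\Pp^4}(2)\otimes\Oo_{{\sf Q}_3}$ as the cohomology of a short complex built from $\wedge^2{\sf W}^{\ast}\otimes\Oo_{{\sf Q}_3}$, ${\sf W}^{\ast}\otimes\Oo_{{\sf Q}_3}(1)$, and $\Oo_{{\sf Q}_3}(2)$ (the exactness in positive characteristic being guaranteed by the same spectral-sequence computation already invoked for $\Psi_1,\Psi_2$). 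After pulling back by ${{\sf F}^n}$ and tensoring with ${\sf S}^{p^n}\Uu_2^{\ast}$, each term becomes (up to a trivial bundle factor) ${{\sf F}^n}^{\ast}\Oo_{{\sf Q}_3}(a)\otimes {\sf S}^{p^n}\Uu_2^{\ast}$ for $a=0,1,2$, i.e.\ $\Oo_{{\sf Q}_3}(ap^n)\otimes{\sf S}^{p^n}\Uu_2^{\ast}$, which equals ${\rm R}^{\bullet}q_{\ast}\bigl(\pi^{\ast}\Oo_{\Pp^3}(p^n)\otimes q^{\ast}\Oo_{{\sf Q}_3}(ap^n)\bigr)$; since this line bundle on ${\bf G}/{\bf B}$ is effective (dominant weight), Kempf vanishing gives vanishing of ${\rm H}^{>0}$. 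Feeding this into the hypercohomology spectral sequence of the two-term complex, the only possibly nonzero contributions sit in total degree $\le 1$, which is precisely the bound needed.

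The main obstacle I anticipate is bookkeeping rather than anything deep: one must be careful that the Koszul resolution of $\Omega^2_{\Pp^4}(2)$ really is exact after restriction to ${\sf Q}_3$ in characteristic $p$ (this is where one reuses the Kapranov-type spectral sequence argument from the proof of Lemma \ref{lem:D-affQ_3}), and one must track the twists through the projection formula carefully so that every line bundle appearing is genuinely effective on ${\bf G}/{\bf B}$ — an off-by-one in the twist would break Kempf vanishing. Once those two points are checked, the long exact sequences and the degeneration of the relevant spectral sequences assemble the bound $i>1$ automatically, completing the proof.
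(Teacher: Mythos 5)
Your first reduction is fine and matches the paper: via (\ref{eq:Psi_2}) and the vanishing of ${\rm H}^{>0}({\sf Q}_3,{\sf S}^{p^n}\Uu_2^{\ast})$, everything comes down to ${\rm H}^{i}({\sf Q}_3,{{\sf F}^n}^{\ast}(\Omega^2_{\Pp^4}(2)\otimes\Oo_{{\sf Q}_3})\otimes{\sf S}^{p^n}\Uu_2^{\ast})=0$ for $i>1$. The gap is in the last step. The Koszul truncation you invoke is the \emph{three-term} right resolution
$0\rightarrow \Omega^2_{\Pp^4}(2)\otimes\Oo_{{\sf Q}_3}\rightarrow \wedge^2{\sf W}^{\ast}\otimes\Oo_{{\sf Q}_3}\rightarrow {\sf W}^{\ast}\otimes\Oo_{{\sf Q}_3}(1)\rightarrow \Oo_{{\sf Q}_3}(2)\rightarrow 0$,
so after applying ${{\sf F}^n}^{\ast}$ and tensoring, the resolved sheaf is quasi-isomorphic to a complex concentrated in degrees $0,1,2$, not $0,1$. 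Even granting (as you correctly argue via Kempf) that each term ${\rm C}^j$ has cohomology only in degree $0$, the hypercohomology spectral sequence leaves a possibly nonzero contribution in total degree $2$, namely the cokernel of the multiplication map
${\sf W}^{\ast(n)}\otimes {\rm H}^0({\sf Q}_3,\Oo(p^n)\otimes{\sf S}^{p^n}\Uu_2^{\ast})\rightarrow {\rm H}^0({\sf Q}_3,\Oo(2p^n)\otimes{\sf S}^{p^n}\Uu_2^{\ast})$,
where ${\sf W}^{\ast(n)}$ maps into ${\rm H}^0(\Oo(p^n))$ by $p^n$-th powers of linear forms. This cokernel is exactly ${\rm H}^{2}({\sf Q}_3,{{\sf F}^n}^{\ast}(\Omega^2_{\Pp^4}(2)\otimes\Oo_{{\sf Q}_3})\otimes{\sf S}^{p^n}\Uu_2^{\ast})$, i.e.\ the group you are trying to kill; its vanishing is a genuine surjectivity statement about multiplication by Frobenius powers and does not follow from degree bookkeeping. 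So the proof as written is circular at the decisive point.

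The paper avoids this by using only the first step of the Koszul complex, $0\rightarrow\Omega^2_{\Pp^4}(2)\otimes\Oo_{{\sf Q}_3}\rightarrow\wedge^2{\sf W}^{\ast}\otimes\Oo_{{\sf Q}_3}\rightarrow\Omega^1_{\Pp^4}(2)\otimes\Oo_{{\sf Q}_3}\rightarrow 0$, which shifts the problem to ${\rm H}^{>0}$ of ${{\sf F}^n}^{\ast}(\Omega^1_{\Pp^4}(2)\otimes\Oo_{{\sf Q}_3})\otimes{\sf S}^{p^n}\Uu_2^{\ast}$, and then \emph{not} resolving $\Omega^1_{\Pp^4}(2)$ further by line bundles (which would reproduce the same cokernel problem one degree lower). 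Instead it uses the conormal sequence of ${\sf Q}_3\subset\Pp^4$ and the self-duality of ${\mathcal T}_{{\sf Q}_3}(-1)$ in odd characteristic to exhibit $\Omega^1_{\Pp^4}(2)\otimes\Oo_{{\sf Q}_3}$ as an extension of ${\mathcal T}_{{\sf Q}_3}={\sf S}^2\Uu_2^{\ast}$ by $\Oo_{{\sf Q}_3}$, and then handles ${{\sf F}^n}^{\ast}{\sf S}^2\Uu_2^{\ast}\otimes{\sf S}^{p^n}\Uu_2^{\ast}$ via the symmetric square of the universal sequence, falling back on Propositions \ref{prop:Prop3forB_2} and \ref{prop:U_2prop}. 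To repair your argument you would need to supply some such input; the line-bundle resolution alone cannot produce the $i=2$ vanishing.
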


\begin{proof}
  Consider the sequence:
\begin{equation}
0\rightarrow \Omega ^2_{\Pp ^4}(2)\otimes \Oo _{{\sf Q}_3}\rightarrow \wedge ^{2}{\sf
  V}^{\ast}\otimes \Oo _{{\sf Q}_3}\rightarrow \Omega ^1_{\Pp
  ^4}(2)\otimes \Oo _{{\sf Q}_3}\rightarrow 0.
\end{equation}

Tensor it with ${\sf S}^{p^n}\Uu _2^{\ast}$. Since ${\rm H}^{i}({\sf
  Q}_3,{\sf S}^{p^n}\Uu _2^{\ast})={\rm H}^i({\bf G}/{\bf B},\pi
  ^{\ast}\Oo _{\Pp ^3}(p^n))=0$ for $i>0$, we see that the statement
  will follow if we show ${\rm H}^{i}({{\sf Q}_3},{{\sf F}^n}^{\ast}\Omega ^1_{\Pp
  ^4}(2)\otimes \Oo _{{\sf Q}_3}\otimes {\sf S}^{p^n}\Uu _2^{\ast}) =
  0$ for $i>0$. Recall that ${\sf Q}_3\subset \Pp ^4 = \Pp ({\sf W})$.
Consider the adjunction sequence tensored with $\Oo _{{\sf Q}_3}(-1)$:
\begin{equation}
0\rightarrow {\mathcal T}_{{\sf Q}_3}(-1)\rightarrow {\mathcal T}_{\Pp ^4}\otimes
\Oo _{{\sf Q}_3}(-1)\rightarrow \Oo _{{\sf Q}_3}(1)\rightarrow 0.
\end{equation}

Recall that if the characteristic $p$ is odd then the bundle ${\mathcal T}_{{\sf Q}_3}(-1)$ is self-dual, that is
${\mathcal T}_{{\sf Q}_3}(-1)=\Omega _{{\sf Q}_3}^1(1)$ on ${\sf Q}_3$
(see, for instance, \cite{Samvan}, Lemma 4.1). Dualizing the above sequence and
tensoring it then with $\Oo _{{\sf Q}_3}(1)$, we get:
\begin{equation}
0\rightarrow \Oo _{{\sf Q}_3}\rightarrow \Omega ^1_{\Pp
  ^4}(2)\otimes \Oo _{{\sf Q}_3}\rightarrow {\mathcal T}_{{\sf Q}_3}\rightarrow
  0.
\end{equation}
 Consequently, the statement will follow from 
${\rm H}^{i}({{\sf Q}_3},{{\sf F}^n}^{\ast}{\mathcal T}_{{\sf Q}_3}\otimes {\sf S}^{p^n}\Uu _2^{\ast}) =
  0$ for $i>0$. Since $p$ is odd, one has ${\mathcal T}_{{\sf Q}_3} = {\sf S}^{2}\Uu
  _2^{\ast}$. Consider the universal exact sequence:
\begin{equation}\label{eq:oncemoreunivseq}
0\rightarrow \Uu _2\rightarrow {\sf V}\otimes \Oo _{{\sf
    Q}_3}\rightarrow \Uu _2^{\ast}\rightarrow 0.
\end{equation}

Recall that $\mbox{det} \ \Uu _2 = \Oo _{{\sf Q}_3}(-1)$. Taking the symmetric square of this sequence and then applying the
functor ${{\sf F}^n}^{\ast}$, we get:
\begin{equation}
0\rightarrow \Oo _{{\sf Q}_3}(-p^n)\rightarrow {{\sf F}^n}^{\ast}\Uu
_2\otimes {{\sf F}^n}^{\ast}{\sf V}\rightarrow {{\sf F}^n}^{\ast}{\sf S}^2{\sf
  V}\otimes \Oo _{{\sf Q}_3}\rightarrow {{\sf F}^n}^{\ast}{\sf S}^{2}\Uu _2^{\ast}\rightarrow 0.
\end{equation}

Tensor it with ${\sf S}^{p^n}\Uu _2^{\ast}$. Proposition \ref{prop:Prop3forB_2} below states that ${\rm H}^{i}({\sf Q}_3,{\sf S}^{p^n}\Uu _2^{\ast}\otimes {{\sf
    F}^n}^{\ast}\Uu _2) = 0$ for $i>1$ and $n\geq 1$. It is sufficient
    therefore to show that ${\rm H}^{3}({\sf Q}_3,{\sf S}^{p^n}\Uu _2^{\ast}\otimes \Oo
_{{\sf Q}_3}(-p^n)) = 0$, or, equivalently, that ${\rm H}^{3}({\bf G}/{\bf
  B},\Oo _{\pi}(-p^n)) = 0$. Indeed, there is an isomorphism of line bundles:
\begin{equation}\label{eq:relbetlinebun}
q^{\ast}\Oo _{{\sf Q}_3}(p^n) =  \Oo _{\pi}(p^n)\otimes \pi ^{\ast}\Oo
_{\Pp ^3}(p^n).
\end{equation}

Hence, by the projection formula:
\begin{equation}
{\rm H}^{3}({\sf Q}_3,{\sf S}^{p^n}\Uu _2^{\ast}\otimes \Oo
_{{\sf Q}_3}(-p^n))={\rm H}^{3}({\sf Q}_3,q_{\ast}(\pi ^{\ast}\Oo _{\Pp
  ^3}(p^n)\otimes q^{\ast}\Oo _{{\sf Q}_3}(-p^n)) = {\rm H}^{3}({\bf G}/{\bf
  B},\Oo _{\pi}(-p^n)).
\end{equation}

Considering the sequence
(\ref{eq:1stbasicseq}) and applying Proposition \ref{prop:U_2prop} we get the statement.
\end{proof}

\begin{proposition}\label{prop:Prop3forB_2}
${\rm H}^{i}({\sf Q}_3,{\sf S}^{p^n}\Uu _2^{\ast}\otimes {{\sf
    F}^n}^{\ast}\Uu _2) = 0$ for $i>1$ and $n\geq 1$.
\end{proposition}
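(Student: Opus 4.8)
The plan is to compute ${\rm H}^{i}({\sf Q}_3, {\sf S}^{p^n}\Uu _2^{\ast}\otimes {{\sf F}^n}^{\ast}\Uu _2)$ by pulling back everything to the flag variety ${\bf G}/{\bf B}$ along $q$, where $\Uu _2$ pulls back into an extension of line bundles by the sequence (\ref{eq:fundseqonX}), and then reducing to cohomology of line bundles on ${\bf G}/{\bf B}$ of the form $\pi ^{\ast}\Oo _{\Pp ^3}(a)\otimes \Oo _{\pi}(b)$, which in turn are cohomology of symmetric/divided powers of $\sf N$ on $\Pp ^3$. First I would rewrite ${\sf S}^{p^n}\Uu _2^{\ast} = {\rm R}^{\bullet}q_{\ast}\pi ^{\ast}\Oo _{\Pp ^3}(p^n)$ (as already used in the excerpt), so that by the projection formula
\begin{equation}
{\rm H}^{i}({\sf Q}_3, {\sf S}^{p^n}\Uu _2^{\ast}\otimes {{\sf F}^n}^{\ast}\Uu _2) = {\rm H}^{i}({\bf G}/{\bf B},\pi ^{\ast}\Oo _{\Pp ^3}(p^n)\otimes q^{\ast}{{\sf F}^n}^{\ast}\Uu _2).
\end{equation}
Now apply ${{\sf F}^n}^{\ast}$ to (\ref{eq:fundseqonX}) to get the sequence (\ref{eq:1stbasicseq}), and tensor it with $\pi ^{\ast}\Oo _{\Pp ^3}(p^n)$:
\begin{equation}
0\rightarrow \pi ^{\ast}\Oo _{\Pp ^3}\rightarrow q^{\ast}{{\sf F}^n}^{\ast}\Uu _2\otimes \pi ^{\ast}\Oo _{\Pp ^3}(p^n)\rightarrow \Oo _{\pi}(-p^n)\otimes \pi ^{\ast}\Oo _{\Pp ^3}(p^n)\rightarrow 0.
\end{equation}

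Next I would compute the cohomology of the two outer terms. The left term is $\pi ^{\ast}\Oo _{\Pp ^3}$, whose cohomology is ${\rm H}^{\bullet}(\Pp ^3,\Oo _{\Pp ^3})$, concentrated in degree $0$. For the right term, by the relation (\ref{eq:relbetlinebun}), $\Oo _{\pi}(-p^n)\otimes \pi ^{\ast}\Oo _{\Pp ^3}(p^n) = q^{\ast}\Oo _{{\sf Q}_3}(-p^n)\otimes \pi ^{\ast}\Oo _{\Pp ^3}(2p^n)$; alternatively, push down along $\pi$: since ${\rm R}^{\bullet}\pi _{\ast}\Oo _{\pi}(-p^n) = {\sf D}^{p^n-2}{\sf N}[-1]$ by relative Serre duality, the projection formula gives ${\rm H}^{i}({\bf G}/{\bf B},\Oo _{\pi}(-p^n)\otimes \pi ^{\ast}\Oo _{\Pp ^3}(p^n)) = {\rm H}^{i-1}(\Pp ^3,{\sf D}^{p^n-2}{\sf N}(p^n))$. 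So the proposition reduces to showing ${\rm H}^{i}(\Pp ^3,{\sf D}^{p^n-2}{\sf N}(p^n)) = 0$ for $i>0$. For that I would use the sequence (\ref{eq:seqforD^kN}) relating ${\sf D}^{k}{\sf N}$, ${\sf D}^{k-1}{\sf N}$ and $\Oo _{\pi}(\pm 1)$ — pulled to ${\bf G}/{\bf B}$ and twisted by $\pi ^{\ast}\Oo _{\Pp ^3}(p^n)$ — together with a descending induction on $k$, reducing at each stage to cohomology of the line bundles $q^{\ast}\Oo _{{\sf Q}_3}(c)\otimes \pi ^{\ast}\Oo _{\Pp ^3}(d)$ on ${\bf G}/{\bf B}$, which are effective or cohomologically transparent in the relevant range; the Kempf vanishing theorem handles the effective dominant cases and Serre duality plus non-effectivity handles the rest.

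Finally I would assemble the long exact cohomology sequence of the twisted (\ref{eq:1stbasicseq}): knowing that the left term has cohomology only in degree $0$ and (from the inductive line-bundle computation) that the right term has cohomology only in degrees $0$ and $1$ (the potential ${\rm H}^{4}$ and the analogue of ${\rm H}^{3}$ being killed by non-effectivity and Serre duality — note ${\rm H}^{3}({\bf G}/{\bf B},\Oo _{\pi}(-p^n)) = 0$ is exactly Proposition \ref{prop:U_2prop} applied through (\ref{eq:1stbasicseq})), the connecting maps force ${\rm H}^{i}$ of the middle term to vanish for $i>1$, which is the claim.

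I expect the main obstacle to be the inductive line-bundle bookkeeping: one must check that at every step of the descending induction using (\ref{eq:seqforD^kN}) twisted by $\pi ^{\ast}\Oo _{\Pp ^3}(p^n)$, the relevant line bundles $q^{\ast}\Oo _{{\sf Q}_3}(c)\otimes \pi ^{\ast}\Oo _{\Pp ^3}(d)$ on ${\bf G}/{\bf B}$ have no higher cohomology in the needed degrees. This is where the specific positivity of the weights (and, for the borderline cases, the $\bf B_2$ cohomology of line bundles via Serre duality on the $3$-fold ${\bf G}/{\bf B}$) really enters, and it is the only place where the argument is not purely formal.
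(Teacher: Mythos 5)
Your reduction is correct as far as it goes: rewriting ${\sf S}^{p^n}\Uu _2^{\ast}$ as ${\rm R}^{\bullet}q_{\ast}\pi ^{\ast}\Oo _{\Pp ^3}(p^n)$ and twisting (\ref{eq:1stbasicseq}) by $\pi ^{\ast}\Oo _{\Pp ^3}(p^n)$ does reduce the proposition to showing that $\Oo _{\pi}(-p^n)\otimes \pi ^{\ast}\Oo _{\Pp ^3}(p^n)$ has no cohomology in degrees $\geq 2$, equivalently ${\rm H}^{j}(\Pp ^3,{\sf D}^{p^n-2}{\sf N}(p^n))=0$ for $j\geq 1$. The gap is in your proposed proof of that vanishing. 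The line bundle $\Oo _{\pi}(-p^n)\otimes \pi ^{\ast}\Oo _{\Pp ^3}(p^n)$ is $\Ll _{\chi}$ with $\chi =2p^n\omega _{\alpha}-p^n\omega _{\beta}$, and the line bundles $\Oo _{\pi}(-k)\otimes \pi ^{\ast}\Oo _{\Pp ^3}(p^n)$ entering your descending induction correspond to the weights $(p^n+k)\omega _{\alpha}-k\omega _{\beta}$. None of these is dominant, and their Serre duals are not dominant either, so neither Kempf vanishing nor non-effectivity says anything about their ${\rm H}^{2}$ --- which, on the four-dimensional variety ${\bf G}/{\bf B}$, is exactly the cohomology group not controlled by the general theory (${\rm H}^{0}$, ${\rm H}^{1}$ and, by Serre duality, ${\rm H}^{3}$, ${\rm H}^{4}$ are understood; ${\rm H}^{2}$ is the genuinely hard one in positive characteristic). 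Moreover the induction via (\ref{eq:seqforD^kN}) does not telescope: pushing its third term down along $\pi$ yields ${\sf D}^{k-1}{\sf N}\otimes {\sf N}(p^n)$ rather than ${\sf D}^{k-1}{\sf N}(p^n)$, so each step introduces an extra tensor factor of ${\sf N}$ that your bookkeeping does not account for. You have yourself flagged this step as ``the only place where the argument is not purely formal'', but it is precisely the step that needs a proof, and the tools you name do not supply one.

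The paper avoids the non-dominant weight altogether by a different first move: it applies ${{\sf F}^n}^{\ast}$ to the universal sequence $0\rightarrow \Uu _2\rightarrow {\sf V}\otimes \Oo _{{\sf Q}_3}\rightarrow \Uu _2^{\ast}\rightarrow 0$ and tensors with ${\sf S}^{p^n}\Uu _2^{\ast}$; since the middle term has no higher cohomology, the claim for $i>1$ follows from ${\rm H}^{j}({\sf Q}_3,{\sf S}^{p^n}\Uu _2^{\ast}\otimes {{\sf F}^n}^{\ast}\Uu _2^{\ast})=0$ for $j>0$ --- note the dual $\Uu _2^{\ast}$, bought at the price of exactly the degree shift that the hypothesis $i>1$ permits. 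The dual of (\ref{eq:1stbasicseq}) twisted by $\pi ^{\ast}\Oo _{\Pp ^3}(p^n)$ then exhibits ${\sf S}^{p^n}\Uu _2^{\ast}\otimes {{\sf F}^n}^{\ast}\Uu _2^{\ast}$ as an extension of ${\sf S}^{2p^n}\Uu _2^{\ast}$ by $\Oo _{{\sf Q}_3}(p^n)$, i.e.\ everything reduces to the effective line bundles $q^{\ast}\Oo _{{\sf Q}_3}(p^n)$ and $\pi ^{\ast}\Oo _{\Pp ^3}(2p^n)$, where Kempf vanishing applies directly. To salvage your route you would need an independent proof that ${\rm H}^{i}({\bf G}/{\bf B},\Ll _{2p^n\omega _{\alpha}-p^n\omega _{\beta}})=0$ for $i\geq 2$; this is true, but establishing it requires the rank-two line-bundle cohomology machinery that the paper is explicitly trying to avoid.
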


\begin{proof}
Apply ${{\sf F}^n}^{\ast}$ to the sequence (\ref{eq:oncemoreunivseq}):
\begin{equation}\label{eq:F^*oftautseqonQ_3}
0\rightarrow {{\sf F}^n}^{\ast}\Uu _2\rightarrow {{\sf F}^n}^{\ast}{\sf V}\otimes \Oo _{{\sf Q}_3}
\rightarrow {{\sf F}^n}^{\ast}\Uu _2^{\ast}\rightarrow 0.
\end{equation}

Tensoring the sequence (\ref{eq:F^*oftautseqonQ_3}) with
${\sf S}^{p^n}\Uu _2^{\ast}$, we obtain:
\begin{equation}
0\rightarrow {{\sf F}^n}^{\ast}\Uu _2\otimes {\sf S}^{p^n}\Uu _2
^{\ast}\rightarrow {{\sf F}^n}^{\ast}{\sf V}\otimes {\sf S}^{p^n}\Uu _2
^{\ast}\rightarrow {{\sf F}^n}^{\ast}\Uu _2^{\ast}\otimes {\sf S}^{p^n}\Uu _2
^{\ast}\rightarrow 0
\end{equation}

We saw above that the higher cohomology groups of ${\sf S}^{p^n}\Uu _2^{\ast}$ vanish. Hence, from the long
exact cohomology sequence it is sufficient to show
that ${\rm H}^{i}({\sf Q}_3,{\sf S}^{p^n}\Uu _2^{\ast}\otimes {{\sf
  F}^n}^{\ast}\Uu _2^{\ast}) = 0$ for $i>0$. Take the dual to the sequence (\ref{eq:1stbasicseq}):
\begin{equation}\label{eq:decompforF^*U^*}
0\rightarrow \Oo _{\pi}(p^n)\rightarrow q^{\ast}{{\sf F}^n}^{\ast}\Uu _2
^{\ast}\rightarrow \pi ^{\ast}\Oo _{\Pp ^3}(p^n)\rightarrow 0. 
\end{equation}

Tensor this sequence with $\pi ^{\ast}\Oo _{\Pp ^3}(p^n)$:
\begin{equation}
0\rightarrow \Oo _{\pi}(p^n)\otimes \pi ^{\ast}\Oo _{\Pp ^3}(p^n)\rightarrow \pi
^{\ast}\Oo _{\Pp ^3}(p^n)\otimes q^{\ast}{{\sf F}^n}^{\ast}\Uu _2
^{\ast}\rightarrow \pi ^{\ast}\Oo _{\Pp ^3}(2p^n)\rightarrow 0.
\end{equation}

Using the isomorphism (\ref{eq:relbetlinebun}) we get: 
\begin{equation}
0\rightarrow q^{\ast}\Oo _{{\sf Q}_3}(p^n)\rightarrow \pi
^{\ast}\Oo _{\Pp ^3}(p^n)\otimes q^{\ast}{{\sf F}^n}^{\ast}\Uu _2
^{\ast}\rightarrow \pi ^{\ast}\Oo _{\Pp ^3}(2p^n)\rightarrow 0.
\end{equation}

Applying to this sequence the functor $q_{\ast}$, and using the
projection formula and the isomorphism ${\rm R}^{\bullet}q_{\ast}\pi
^{\ast}\Oo _{\Pp ^3}(k)={\sf S}^{k}\Uu _2^{\ast}$ for $k\geq 0$, we obtain:
\begin{equation}
0\rightarrow \Oo _{{\sf Q}_3}(p^n)\rightarrow {\sf S}^{p^n}\Uu
_2^{\ast}\otimes {{\sf F}^n}^{\ast}\Uu _2
^{\ast}\rightarrow {\sf S}^{2p^n}\Uu _2^{\ast}\rightarrow 0.
\end{equation}

The leftmost and rightmost terms of the above sequence have
vanishing higher cohomology, hence the statement of the lemma.
\end{proof}

\begin{proposition}\label{prop:Prop4forB_2}
${\rm H}^{3}({\sf Q}_3,{\sf S}^{p^n-2}\Uu _2^{\ast}(-2)\otimes {{\sf
  F}^n}^{\ast}\Uu _2) =0$ for $n\geq 1$.
\end{proposition}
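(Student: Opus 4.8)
The plan is to transport the computation to ${\bf G}/{\bf B}$ along the $\Pp^1$-bundle $q$ and then to feed in the sequence (\ref{eq:1stbasicseq}), exactly as in the proofs of Propositions \ref{prop:Prop2forB_2} and \ref{prop:Prop3forB_2}. First I would note that $\pi^{\ast}\Oo_{\Pp^3}(p^n-2)$ has non-negative degree on the fibres of $q$, so ${\rm R}^{\bullet}q_{\ast}\pi^{\ast}\Oo_{\Pp^3}(p^n-2)={\sf S}^{p^n-2}\Uu_2^{\ast}$ is concentrated in degree $0$; hence, by the projection formula and the Leray spectral sequence,
\[
{\rm H}^{3}({\sf Q}_3,{\sf S}^{p^n-2}\Uu_2^{\ast}(-2)\otimes{{\sf F}^n}^{\ast}\Uu_2)={\rm H}^{3}\big({\bf G}/{\bf B},\;q^{\ast}{{\sf F}^n}^{\ast}\Uu_2\otimes\pi^{\ast}\Oo_{\Pp^3}(p^n-2)\otimes q^{\ast}\Oo_{{\sf Q}_3}(-2)\big).
\]
Call the sheaf on the right $M$. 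Tensoring (\ref{eq:1stbasicseq}) with $\pi^{\ast}\Oo_{\Pp^3}(p^n-2)\otimes q^{\ast}\Oo_{{\sf Q}_3}(-2)$ would give a short exact sequence $0\to\omega_{{\bf G}/{\bf B}}\to M\to\Ll\to 0$, with $\Ll=\Oo_{\pi}(-p^n)\otimes\pi^{\ast}\Oo_{\Pp^3}(p^n-2)\otimes q^{\ast}\Oo_{{\sf Q}_3}(-2)$ a line bundle, since the left-hand term is $\pi^{\ast}\Oo_{\Pp^3}(-2)\otimes q^{\ast}\Oo_{{\sf Q}_3}(-2)=\omega_{{\bf G}/{\bf B}}$. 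By Serre duality and the Kempf vanishing theorem, ${\rm H}^{j}({\bf G}/{\bf B},\omega_{{\bf G}/{\bf B}})={\rm H}^{4-j}({\bf G}/{\bf B},\Oo_{{\bf G}/{\bf B}})^{\ast}$ is zero for $j\ne 4$, so the long exact sequence forces ${\rm H}^{3}({\bf G}/{\bf B},M)$ to inject into ${\rm H}^{3}({\bf G}/{\bf B},\Ll)$, and it remains to prove that ${\rm H}^{3}$ of the line bundle $\Ll$ vanishes.

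Next I would identify $\Ll$ by its weight. Using $\pi^{\ast}\Oo_{\Pp^3}(1)=\Ll_{\omega_{\alpha}}$, $q^{\ast}\Oo_{{\sf Q}_3}(1)=\Ll_{\omega_{\beta}}$ and $\Oo_{\pi}(1)=\Ll_{\omega_{\beta}-\omega_{\alpha}}$ (the last read off from (\ref{eq:relbetlinebun})), one gets $\Ll=\Ll_{\lambda}$ with $\lambda=(2p^n-2)\omega_{\alpha}-(p^n+2)\omega_{\beta}$. Since $\omega_{{\bf G}/{\bf B}}=\Ll_{-2\rho}$, Serre duality on ${\bf G}/{\bf B}$ gives ${\rm H}^{3}({\bf G}/{\bf B},\Ll_{\lambda})={\rm H}^{1}({\bf G}/{\bf B},\Ll_{-\lambda-2\rho})^{\ast}$, and $-\lambda-2\rho=-2p^n\omega_{\alpha}+p^n\omega_{\beta}=-p^n\alpha$. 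So the whole statement reduces to the single vanishing ${\rm H}^{1}({\bf G}/{\bf B},\Ll_{-p^n\alpha})=0$ for $n\ge 1$. One could equally phrase this downstairs: since $\langle-p^n\alpha,\alpha^{\vee}\rangle=-2p^n\le-2$, pushing forward along $q$ gives $q_{\ast}\Ll_{-p^n\alpha}=0$ and, by relative duality, ${\rm R}^{1}q_{\ast}\Ll_{-p^n\alpha}=(q_{\ast}\Ll_{(p^n-1)\alpha})^{\ast}={\sf D}^{2p^n-2}\Uu_2(p^n-1)$, so (by Leray) the assertion is the same as ${\rm H}^{0}({\sf Q}_3,{\sf D}^{2p^n-2}\Uu_2(p^n-1))=0$; pushing forward along $\pi$ instead turns it into ${\rm H}^{1}(\Pp^3,{\sf S}^{p^n}{\sf N}(-p^n))=0$.

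This last vanishing is exactly of the kind settled by Andersen's determination of the first cohomology of a line bundle on a flag variety (cf. the Remark above): the only simple root $\gamma$ with $\langle-p^n\alpha,\gamma^{\vee}\rangle<0$ is $\gamma=\alpha$, and $s_{\alpha}\cdot(-p^n\alpha)=(p^n-1)\alpha=(2p^n-2)\omega_{\alpha}-(p^n-1)\omega_{\beta}$ is not dominant, whence ${\rm H}^{1}({\bf G}/{\bf B},\Ll_{-p^n\alpha})=0$. To stay inside the elementary framework of the paper one would instead establish ${\rm H}^{0}({\sf Q}_3,{\sf D}^{2p^n-2}\Uu_2(p^n-1))=0$ by hand, using the sequences (\ref{eq:fundseqonX}) and (\ref{eq:seqforD^kN}) together with the vanishing ${\rm H}^{0}(\Pp^3,{\sf D}^{k}{\sf N})=0$ established, by descending induction, in the proof of Proposition \ref{prop:U_2prop}. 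I expect this residual line-bundle computation to be the only genuinely delicate step, and the one point where positive characteristic really enters: the divided power ${\sf D}^{2p^n-2}\Uu_2$ carries no trivial direct summand although the trivial weight does occur in it, so the vanishing cannot be deduced from a splitting of the bundle but must be extracted from the cohomology of the line bundle $\Ll_{-p^n\alpha}$ itself. Granting it, the exact sequence $0\to\omega_{{\bf G}/{\bf B}}\to M\to\Ll\to 0$ finishes the proof.
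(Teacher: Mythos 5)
Your reductions down to a single line bundle are all correct as far as they go: the identification of $M$, the exact sequence $0\to\omega_{{\bf G}/{\bf B}}\to M\to\Ll\to 0$, the resulting injection ${\rm H}^3({\bf G}/{\bf B},M)\hookrightarrow{\rm H}^3({\bf G}/{\bf B},\Ll)$, and the Serre-duality identification ${\rm H}^3({\bf G}/{\bf B},\Ll)\cong{\rm H}^1({\bf G}/{\bf B},\Ll_{-p^n\alpha})^{\ast}$. The gap is that the vanishing you then assert is false: ${\rm H}^1({\bf G}/{\bf B},\Ll_{-p^n\alpha})\neq 0$. Your appeal to Andersen's criterion only uses its characteristic-zero part ("$s_{\alpha}\cdot\lambda$ is not dominant"); in characteristic $p$ the criterion has additional non-vanishing cases, and $\lambda=-p^n\alpha$ falls into one of them (note that $\lambda+p^n\alpha=0$ is dominant). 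Concretely, by relative duality $R^1q_{\ast}\Ll_{-p^n\alpha}$ is the homogeneous bundle on ${\bf G}/{\bf P}_{\alpha}$ attached to the $P_{\alpha}$-module dual to ${\rm H}^0({\bf P}_{\alpha}/{\bf B},(p^n-1)\alpha)$, i.e.\ to the ${\bf SL}_2$-Weyl module of highest weight $2p^n-2$ (with trivial central twist). That Weyl module contains a \emph{trivial submodule}: dually, the functional "coefficient of $x^{p^n-1}y^{p^n-1}$" on binary forms of degree $2p^n-2$ is ${\bf SL}_2$-invariant in characteristic $p$, because $\binom{p^n-1+s}{s}\equiv 0 \bmod p$ for all $1\le s\le p^n-1$. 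Left exactness of ${\rm H}^0$ then gives $k={\rm H}^0({\bf G}/{\bf P}_{\alpha},\Oo)\hookrightarrow{\rm H}^0({\bf G}/{\bf P}_{\alpha},R^1q_{\ast}\Ll_{-p^n\alpha})={\rm H}^1({\bf G}/{\bf B},\Ll_{-p^n\alpha})$, so ${\rm H}^0({\sf Q}_3,{\sf D}^{2p^n-2}\Uu_2(p^n-1))\neq0$. (A purely geometric cross-check for $n=1$: ${\rm H}^1({\bf G}/{\bf B},\Ll_{-p\alpha})={\rm H}^1(\Pp^3,{\sf S}^{p}{\sf N}(-p))$, and the $p$-th power map ${{\sf F}}^{\ast}{\sf N}\to{\sf S}^{p}{\sf N}$ together with the Frobenius pullback of the Euler sequence produces a nonzero class coming from ${\rm H}^1(\Pp^3,{{\sf F}}^{\ast}\Omega^1_{\Pp^3})=k$.) Your own closing remark is therefore pointing the wrong way: the trivial weight is indeed not a direct summand, but it \emph{is} carried by a trivial submodule, and a submodule is all that is needed for a nonzero ${\rm H}^0$.

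Because ${\rm H}^3({\bf G}/{\bf B},\Ll)\neq0$, the injection ${\rm H}^3(M)\hookrightarrow{\rm H}^3(\Ll)$ proves nothing by itself; to finish along your lines you would have to show that the connecting map ${\rm H}^3({\bf G}/{\bf B},\Ll)\to{\rm H}^4({\bf G}/{\bf B},\omega_{{\bf G}/{\bf B}})\cong k$ is injective, i.e.\ that the extension class defining $M$ kills this one-dimensional obstruction --- a statement about the extension, not about the line bundle. The paper avoids the problem: after the same Serre duality it keeps the rank-two bundle intact, rewriting the group as ${\rm H}^1({\bf G}/{\bf B},\pi^{\ast}\Oo_{\Pp^3}(-p^n)\otimes q^{\ast}{{\sf F}^n}^{\ast}\Uu_2^{\ast})$, and then trades $\Uu_2^{\ast}$ for $\Uu_2$ via the self-dual sequence $0\to\Uu_2\to{\sf V}\otimes\Oo_{{\sf Q}_3}\to\Uu_2^{\ast}\to 0$; the bundle $\pi^{\ast}\Oo_{\Pp^3}(-p^n)\otimes q^{\ast}{{\sf F}^n}^{\ast}\Uu_2$ is an extension of $q^{\ast}\Oo_{{\sf Q}_3}(-p^n)$ by $\pi^{\ast}\Oo_{\Pp^3}(-2p^n)$, both with cohomology concentrated in degree $3$, and the argument closes. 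You should either adopt that route or supply a genuine proof that the connecting map above is nonzero.
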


\begin{proof}
One has ${\rm H}^{i}({{\sf Q}_3},{\sf S}^{p^n-2}\Uu
_2^{\ast}(-2)\otimes {{\sf F}^n}^{\ast}\Uu _2) =
{\rm H}^{i}({\bf G}/{\bf B},\pi ^{\ast}\Oo _{\Pp ^3}(p^n-2)\otimes q^{\ast}\Oo
_{{\sf Q}_3}(-2)\otimes q^{\ast}{{\sf F}^n}^{\ast}\Uu _2)$. By Serre
duality we get:
\begin{equation}
{\rm H}^{3}({\bf G}/{\bf B},\pi ^{\ast}\Oo _{\Pp ^3}(p^n-2)\otimes q^{\ast}\Oo
_{{\sf Q}_3}(-2)\otimes q^{\ast}{{\sf F}^n}^{\ast}\Uu _2)={\rm
  H}^1({\bf G}/{\bf B},\pi ^{\ast}\Oo _{\Pp ^3}(-p^n)\otimes q^{\ast}{{\sf F}^n}^{\ast}\Uu _2^{\ast})^{\ast}.
\end{equation}

Tensor the sequence (\ref{eq:F^*oftautseqonQ_3}) with $\pi
^{\ast}\Oo _{\Pp ^3}(-p^n)$. One has:
\begin{eqnarray}
& \dots \rightarrow {\rm H}^{1}({\bf G}/{\bf B},{{\sf F}^n}^{\ast}{\sf V}\otimes \pi
^{\ast}\Oo _{\Pp ^3}(-p^n))\rightarrow {\rm
  H}^1({\bf G}/{\bf B},\pi ^{\ast}\Oo _{\Pp ^3}(-p^n)\otimes
q^{\ast}{{\sf F}^n}^{\ast}\Uu _2^{\ast})\rightarrow \\
& \rightarrow {\rm
  H}^{2}({\bf G}/{\bf B},\pi ^{\ast}\Oo _{\Pp ^3}(-p^n)\otimes q^{\ast}{{\sf
    F}^n}^{\ast}\Uu _2)\rightarrow \dots . &\nonumber
\end{eqnarray}

Clearly, ${\rm H}^{1}({\bf G}/{\bf B},{{\sf F}^n}^{\ast}{\sf V}\otimes \pi
^{\ast}\Oo _{\Pp ^3}(-p^n)) \ = \ {\rm H}^1(\Pp ^3,\Oo _{\Pp
  ^3}(-p^n))\otimes {{\sf F}^n}^{\ast}{\sf V} = 0$. Let us show that ${\rm
  H}^{2}({\bf G}/{\bf B},\pi ^{\ast}\Oo _{\Pp ^3}(-p^n)\otimes q^{\ast}{{\sf
    F}^n}^{\ast}\Uu _2)$ = $0$. Indeed, tensoring the sequence
(\ref{eq:1stbasicseq}) with $\pi ^{\ast}\Oo _{\Pp ^3}(-p^n)$ we see that ${\rm
  H}^{i}({\bf G}/{\bf B},\pi ^{\ast}\Oo _{\Pp ^3}(-p^n)\otimes q^{\ast}{{\sf
    F}^n}^{\ast}\Uu _2)=0$ if $i\neq 3$, hence the statement.

\end{proof}

\end{document}